
\documentclass[11pt]{amsart}
\usepackage{amsmath,amssymb,latexsym, dsfont}
\usepackage[small]{caption}
\usepackage{graphicx,color,mathrsfs,tikz}
\usepackage{subfigure,color,bm}
\usepackage[colorlinks=true,urlcolor=blue,
citecolor=red,linkcolor=blue,linktocpage,pdfpagelabels,
bookmarksnumbered,bookmarksopen]{hyperref}
\usepackage[italian,english]{babel}
\usepackage{units}
\usepackage{enumitem}
\usepackage[left=2.5cm,right=2.5cm,top=2.9cm,bottom=2.9cm]{geometry}
\usepackage{fancyhdr, dsfont}

\usepackage{amscd}
\usepackage{esint}


%

\theoremstyle{plain}


\newtheorem{thm}{Theorem}[section]
\newtheorem*{thm*}{Theorem}
\theoremstyle{plain}
\newtheorem{lem}[thm]{Lemma}
\newtheorem{pro}{Proposition}[section]

\theoremstyle{definition}

\newtheorem{rem}{Remark}[section]

\newcommand{\tu}{\tilde u}

\newcommand{\I}{J}

\newcommand{\J}{K}

\newcommand{\mN}{\mathbb{N}}
\newcommand{\mS}{\mathbb{S}}
\newcommand{\mR}{\mathbb{R}}




\newcommand{\ro}{\mathbb{R}}

\newcommand{\mcal}[1]{\mathcal{#1}}
\newcommand{\md}[1]{\left|#1 \right|}

\newcommand{\bct}[1]{\left(#1\right)}

\newcommand{\T}[1]{\tilde{#1}}

\newcommand{\dsp}{\displaystyle}



\numberwithin{equation}{section} \allowdisplaybreaks


        \title[Exponential Integrability in the spirit of Moser-Trudinger's inequalities ]{Exponential integrability in the spirit of Moser-Trudinger's inequalities of functions with finite non-local, non-convex energy}

        \author[A. Mallick]{Arka Mallick}
        \address[A. Mallick]{Department of Mathematics \newline\indent
	EPFL SB CAMA \newline\indent
	Station 8 CH-1015 Lausanne, Switzerland}
\email{\tt arka.mallick@epfl.ch}

        \author[H.-M. Nguyen]{Hoai-Minh Nguyen}
  \address[H.-M. Nguyen]{Department of Mathematics \newline\indent
	EPFL SB CAMA \newline\indent
	Station 8 CH-1015 Lausanne, Switzerland}
\email{\tt hoai-minh.nguyen@epfl.ch}
        
     
        

\keywords{Sobolev's inequality, Poincar\'e's inequality, Moser-Trudinger's inequality.}
\subjclass[2010]{26D10, 26A54}


\begin{document}

\begin{abstract}  Let $d \ge 1$, $p \ge d$, and let $\Omega$ be a smooth bounded open subset of $\mR^d$. 
We prove some exponential integrability   in the spirit of Moser-Trudinger's inequalities for measurable functions $u$ defined in $\Omega$ such that 
$$
\mathop{\int_{\Omega} \int_{\Omega}}_{|u(x) - u(y)| > \delta} \frac{1}{|x-y|^{d+p}} \, dx \, dy < + \infty, 
$$
for some $\delta > 0$. This double integral appeared in  characterizations of Sobolev spaces and involved in improvements of the Sobolev inequaliies, Poincar\'e inequalities, and Hardy inequalities. 
\end{abstract}

\maketitle


\section{Introduction}

Let $(\rho_n)$ be  a sequence of non-negative radial functions satisfying
\begin{equation}\label{rho-n}
\dsp \lim_{ n \to \infty } \int_\tau^\infty \rho_n(r) r^{N-1} \,
dr = 0 \quad   \forall \, \tau >0,
\quad \mbox{ and } \quad 
\lim_{n \to  \infty} \int_{0}^{+ \infty} \rho_n(r) r^{N-1} \, dr =
1.
\end{equation}
Set 
\begin{equation}\label{def-Kdp}
K_{d, p}: = \int_{\mS^{d-1}} |\sigma \cdot e|^p \, d \sigma,  
\end{equation}
for some $e \in \mS^{d-1}$, the unit sphere in $\mR^d$. 

\medskip 
Jean Bourgain, Haim Brezis, and Petru Mironescu \cite[Theorems 1 and 2]{BBM1} (see also \cite{BBM2} and \cite{B1}) proved the following BBM formula: 

\begin{pro} \label{proBBM}
Let $d \ge 1$, $p>1$ and  let $\Omega$ be a smooth bounded open subset in $\mR^d$ or $\Omega = \mR^d$. Assume that  $g \in L^p(\Omega)$ and let $(\rho_n)$ satisfy \eqref{rho-n}. Then $g \in W^{1, p}(\mR^d)$ if and only if 
\begin{equation*}
\liminf_{n  \to  \infty } \int_{\Omega} \int_{\Omega} \frac{|u(x)
-u(y)|^p}{|x-y|^p} \rho_n(|x-y|)\, dx \, dy  < + \infty.
\end{equation*}
Moreover, for $g \in W^{1, p}(\Omega)$, 
\begin{equation*}
\lim_{n  \to  \infty } \int_{\Omega} \int_{\Omega} \frac{|u(x)
-u(y)|^p}{|x-y|^p} \rho_n(|x-y|)\, dx \, dy = K_{d, p}
\int_{\mR^N} |\nabla u(x)|^p \, dx.  
\end{equation*}
\end{pro}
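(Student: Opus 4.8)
The plan is to prove the two implications separately and to treat $\Omega=\mR^d$ first, where translation invariance keeps everything transparent. For the forward direction I would begin with $u\in C_c^\infty(\mR^d)$: substituting $y=x+h$ and passing to polar coordinates $h=r\sigma$ rewrites the double integral as $\int_0^\infty\rho_n(r)\,r^{d-1}\,G(r)\,dr$, where $G(r):=\int_{\mR^d}\int_{\mS^{d-1}}\big|\tfrac{u(x)-u(x+r\sigma)}{r}\big|^p\,d\sigma\,dx$. The function $G$ is bounded and continuous on $[0,\infty)$, and the mean value theorem together with dominated convergence gives $G(r)\to\int_{\mR^d}\int_{\mS^{d-1}}|\nabla u(x)\cdot\sigma|^p\,d\sigma\,dx=K_{d,p}\int_{\mR^d}|\nabla u|^p$ as $r\to0^+$. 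Splitting $\int_0^\infty=\int_0^\tau+\int_\tau^\infty$, using \eqref{rho-n} (the tail $\int_\tau^\infty$ is killed while the mass of $\rho_n(r)r^{d-1}$ on $[0,\tau]$ tends to $1$) and then letting $\tau\to0$ produces the asserted limit for $u\in C_c^\infty(\mR^d)$.

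To pass from $C_c^\infty$ to all of $W^{1,p}(\mR^d)$ I would use the uniform estimate
\[
\int_{\mR^d}\int_{\mR^d}\frac{|u(x)-u(y)|^p}{|x-y|^p}\,\rho_n(|x-y|)\,dx\,dy\ \leq\ C\,\|\nabla u\|_{L^p(\mR^d)}^p ,
\]
valid for all $u\in W^{1,p}(\mR^d)$ uniformly in $n$, which follows from the elementary inequality $\|u(\cdot+h)-u\|_{L^p}\le|h|\,\|\nabla u\|_{L^p}$ (integrate $u$ along the segments $[x,x+h]$, apply Jensen, then Fubini) after the same change of variables, since $\int_{\mR^d}\rho_n(|h|)\,dh$ stays bounded by \eqref{rho-n}. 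Since $v\mapsto\big(\int_{\mR^d}\int_{\mR^d}\tfrac{|v(x)-v(y)|^p}{|x-y|^p}\rho_n(|x-y|)\,dx\,dy\big)^{1/p}$ is a seminorm, approximating a given $u\in W^{1,p}(\mR^d)$ by $u_k\in C_c^\infty(\mR^d)$ in $W^{1,p}$ and combining the smooth case with this seminorm's triangle inequality shows that the limit exists and equals $K_{d,p}\int_{\mR^d}|\nabla u|^p$ for every $u\in W^{1,p}(\mR^d)$.

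For the converse, suppose $u\in L^p(\mR^d)$ and $L:=\liminf_n\int_{\mR^d}\int_{\mR^d}\tfrac{|u(x)-u(y)|^p}{|x-y|^p}\rho_n(|x-y|)\,dx\,dy<\infty$. I would mollify, $u_\eps:=u*\varphi_\eps$, so that $u_\eps\in C^\infty\cap W^{1,p}(\mR^d)$ by Young's inequality and $u_\eps\to u$ in $L^p(\mR^d)$. Jensen's inequality for $t\mapsto|t|^p$ together with the translation invariance of the double integral gives, for every $n$,
\[
\int_{\mR^d}\int_{\mR^d}\frac{|u_\eps(x)-u_\eps(y)|^p}{|x-y|^p}\rho_n(|x-y|)\,dx\,dy\ \leq\ \int_{\mR^d}\int_{\mR^d}\frac{|u(x)-u(y)|^p}{|x-y|^p}\rho_n(|x-y|)\,dx\,dy ,
\]
so taking $\liminf_n$ and applying the previous paragraph to the left-hand side yields $K_{d,p}\|\nabla u_\eps\|_{L^p}^p\le L$ for every $\eps>0$. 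Because $p>1$, the bounded family $\{u_\eps\}$ in the reflexive space $W^{1,p}(\mR^d)$ has a subsequence with $\nabla u_{\eps_k}\rightharpoonup v$ weakly in $L^p$, and $v=\nabla u$ in the sense of distributions since $u_{\eps_k}\to u$ in $L^p$; hence $u\in W^{1,p}(\mR^d)$, and the limit formula is the one already obtained. This weak-compactness step is the only place where the hypothesis $p>1$ is genuinely used, and it is the conceptual crux of the equivalence.

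On a smooth bounded $\Omega$ the interior argument is unchanged, but the segment $[x,x+h]$ need not lie in $\Omega$, so both the inequality $\|u(\cdot+h)-u\|_{L^p}\le|h|\,\|\nabla u\|_{L^p}$ and the mollification require care near $\partial\Omega$. For the converse I would run the mollification on smooth subdomains $\Omega'\Subset\Omega$: for $\eps<\operatorname{dist}(\Omega',\partial\Omega)$ the function $u*\varphi_\eps$ is well defined on $\Omega'$ and the Jensen estimate still closes because $\Omega'-z\subset\Omega$ for such $z$, giving $K_{d,p}\int_{\Omega'}|\nabla u|^p\le L$ for every such $\Omega'$, whence $u\in W^{1,p}(\Omega)$ on letting $\Omega'\uparrow\Omega$. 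For the exact-constant forward statement I would use a bounded extension operator to obtain the uniform bound, and for smooth $u$ split the region near the diagonal using that a smooth domain satisfies a uniform interior ball condition, so that the ``bad'' pairs $(x,y)$ --- those whose connecting segment leaves $\Omega$ --- lie in a boundary collar of width $O(|x-y|)$ and contribute $o(1)$ in the limit. This near-boundary bookkeeping, rather than anything in the interior, is the principal technical obstacle, and it is where the smoothness of $\partial\Omega$ enters.
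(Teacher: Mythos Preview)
The paper does not give its own proof of this proposition: it is quoted as a known result of Bourgain, Brezis, and Mironescu, with the reference ``\cite[Theorems 1 and 2]{BBM1} (see also \cite{BBM2} and \cite{B1})'' immediately preceding the statement. So there is no in-paper proof to compare against.

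That said, your outline is essentially the original BBM argument and is correct as a sketch. The smooth case via polar coordinates and the splitting $\int_0^\tau+\int_\tau^\infty$ is exactly how \cite{BBM1} handles the forward direction, the uniform bound $\|u(\cdot+h)-u\|_{L^p}\le |h|\,\|\nabla u\|_{L^p}$ combined with density gives the general $W^{1,p}$ case, and the converse via mollification, Jensen, and weak compactness in the reflexive space $W^{1,p}$ (this is where $p>1$ is used) is the standard route. Your treatment of the bounded-domain case --- exhausting by $\Omega'\Subset\Omega$ for the converse, and using an extension operator plus a boundary-collar estimate for the forward limit --- is also in line with the original; the smoothness of $\partial\Omega$ is indeed only needed for that near-boundary bookkeeping. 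Nothing is missing at the level of a sketch; a full write-up would have to make the ``$o(1)$ boundary contribution'' precise, but the idea you describe (segments leaving $\Omega$ are confined to a collar of width $O(|x-y|)$) is the right one.
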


A variant of Propposition~\ref{proBBM} for $p =1$ involving functions of bounded variations was obtained by  Jean Bourgain, Haim Brezis, and Petru Mironescu \cite{BBM1} and Juan Davila \cite{Davila}. Further studies related to this characterizations can be founded in \cite{ADM, AK, BN-Two, BN-BBM, CV, LeoniSpector, Ng-Squ1, Ng-Squ4, Squ1, Po1, Po2, PoSp}.

We next discuss another characterization of Sobolev spaces in the spirit of the BBM formula. To this end,  for $d\ge 1$, $p \ge 1$,  and $\delta > 0$, for a measurable subset $O$ of $\mR^d$, and for a measurable function $u$ defined in $O$, set
\begin{equation}
I_{\delta, p} (u, O) = \mathop{\mathop{\int_O \int_O}}_{|u(x) - u(y)|> \delta} \frac{ \delta^p}{|x - y|^{d + p}} \, dx \, dy.  
\end{equation}
This quantity has its root in  estimates for the topological degree in \cite{BBNg, BBM3, NgDegree1, NgDegree2, Sch} which has the motivation from the study of the Ginzburg Landau equation \cite{BBH}.

\medskip 
It was shown \cite[Theorems 2 and 5]{NgSob1} and \cite[Theorem 1]{BoNg1} that 
\begin{pro}\label{proPrincipal}
Let $d \ge 1$ and $\Omega$ be a smooth bounded open subset in $\mR^d$ or $\Omega = \mR^d$ and  let  $p> 1$ and  $g \in L^p(\Omega)$. Then $u \in W^{1, p}(\Omega)$ if and only if 
\begin{equation*}
\liminf_{\delta \to 0 } I_{\delta, p} (u, \Omega) < + \infty. 
\end{equation*}
Moreover,  for $g \in W^{1, p}(\Omega)$, 
\begin{equation}\label{equality1}
\lim_{\delta \to 0 } I_{\delta, p} (u, \Omega) =   \frac{1}{p} K_{d, p} \int_{\Omega} |\nabla u(x)|^p
\,dx,  
\end{equation}
where $K_{d,p}$ is defined by \eqref{def-Kdp}. We also have, for all $\delta > 0$, 
\begin{equation} \label{estimate1.1}
I_{\delta, p}(u, \Omega) \le C_{d,p} \int_{\mR^N} |\nabla u(x)|^p \, dx \quad \forall \,  u \in W^{1, p}(\Omega), 
\end{equation}
for some positive constant $C_{d, p}$ depending only on $d$ and $p$.  
\end{pro}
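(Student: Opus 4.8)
\medskip
\noindent\textbf{Sketch of proof.}
The statement splits into two essentially independent halves. The first is the quantitative bound \eqref{estimate1.1}, which also yields the ``if $u\in W^{1,p}$'' direction of the characterization and the inequality $\le$ in \eqref{equality1}. The second is the necessity; for this it suffices to establish the \emph{qualitative} implication ``$\liminf_{\delta\to0}I_{\delta,p}(u,\Omega)<\infty\ \Rightarrow\ u\in W^{1,p}(\Omega)$'', because as soon as $u\in W^{1,p}$ the identity \eqref{equality1}, proved in the first half, automatically upgrades the $\liminf$ to the sharp value $\tfrac1p K_{d,p}\int_\Omega|\nabla u|^p$. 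Throughout, since $\Omega$ is smooth and bounded one may extend $u\in W^{1,p}(\Omega)$ to an element of $W^{1,p}(\mR^d)$, so all difference-quotient manipulations can be performed on $\mR^d$.

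\medskip
\noindent\textbf{The bound \eqref{estimate1.1}.} The plan is to use the pointwise maximal-function inequality: for a.e.\ $x,y$ in a fixed ball $B\supset\Omega$,
\[
|u(x)-u(y)| \le C_d\,|x-y|\,\big(Mg(x)+Mg(y)\big), \qquad g:=|\nabla u|,
\]
with $M$ the (restricted) Hardy--Littlewood maximal operator. Hence, after interchanging $x$ and $y$ if needed, $\{|u(x)-u(y)|>\delta\}\subseteq\{|x-y|>\rho_x\}$ with $\rho_x:=\delta/(2C_d\,Mg(x))$. Integrating first in $y$ we get $\int_{|x-y|>\rho_x}|x-y|^{-d-p}\,dy=c_d\,\rho_x^{-p}/p$, hence
\[
I_{\delta,p}(u,\Omega)\ \le\ C_{d,p}\,\delta^p\int_{\Omega}\rho_x^{-p}\,dx\ =\ C_{d,p}'\int_{\Omega}(Mg)^p\,dx\ \le\ C_{d,p}''\int_{\mR^d}|\nabla u|^p\,dx,
\]
the last step being the Hardy--Littlewood maximal inequality — this is precisely where the hypothesis $p>1$ enters.

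\medskip
\noindent\textbf{The limit \eqref{equality1}.} Take first $u\in C^2(\overline\Omega)$ and localize: pairs with $|x-y|>\tau$ contribute $O(\delta^p\tau^{-p})\to0$ as $\delta\to0$, and pairs lying in the $\tau$-boundary layer contribute at most $\int_{\{\mathrm{dist}(\cdot,\partial\Omega)<\tau\}}(Mg)^p\to0$ as $\tau\to0$; on the remaining pairs $u(x+h)-u(x)=\nabla u(x)\cdot h+o(|h|)$ uniformly, which for fixed $\eta\in(0,1)$ and $|h|$ small gives the sandwich
\[
\{\,|\nabla u(x)\cdot h|>(1+\eta)\delta\,\}\setminus A_\eta\ \subseteq\ \{\,|u(x)-u(x+h)|>\delta\,\}\ \subseteq\ \{\,|\nabla u(x)\cdot h|>(1-\eta)\delta\,\}\cup A_\eta ,
\]
with $|A_\eta|$ as small as desired. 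Inserting the middle set and passing to polar coordinates $h=r\sigma$,
\begin{align*}
\delta^p\int_{\mR^d}\int_{\mR^d}\frac{\mathds{1}_{|\nabla u(x)\cdot h|>\delta}}{|h|^{d+p}}\,dh\,dx
&=\delta^p\int_{\mR^d}\int_{\mS^{d-1}}\int_{\delta/|\nabla u(x)\cdot\sigma|}^{\infty}r^{-1-p}\,dr\,d\sigma\,dx\\
&=\frac1p\int_{\mS^{d-1}}\int_{\mR^d}|\nabla u(x)\cdot\sigma|^p\,dx\,d\sigma
=\frac1p\,K_{d,p}\int_{\mR^d}|\nabla u(x)|^p\,dx,
\end{align*}
using $\int_{\mS^{d-1}}|v\cdot\sigma|^p\,d\sigma=K_{d,p}|v|^p$. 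Letting $\eta\to0$ proves \eqref{equality1} for smooth $u$. Extending it to all $u\in W^{1,p}(\Omega)$ requires more than a naive density argument — $I_{\delta,p}$ is not subadditive in a scale-preserving way, so one argues instead via the approximate differentiability of $W^{1,p}$ functions, the ``bad sets'' $\{\,x:|u(x+h)-u(x)-\nabla u(x)\cdot h|>\eta\delta\,\}$ being controlled, uniformly in the relevant range of $|h|$, by the maximal estimate above.

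\medskip
\noindent\textbf{Necessity.} Assume $u\in L^p(\Omega)$ and $\liminf_{\delta\to0}I_{\delta,p}(u,\Omega)<\infty$. The target is a Fréchet--Kolmogorov bound $\sup_{0<|h|<h_0}|h|^{-p}\int_{\Omega_{|h|}}|u(x+h)-u(x)|^p\,dx<\infty$, where $\Omega_\rho:=\{x\in\Omega:\mathrm{dist}(x,\partial\Omega)>\rho\}$; together with $u\in L^p$ and $p>1$ this forces $u\in W^{1,p}(\Omega)$, after which \eqref{equality1} concludes. To get the difference-quotient bound one slices $\Omega$ by lines in a fixed direction and reduces to the one-dimensional claim that $v\in L^p(I)$ with $\liminf_{\delta\to0}\delta^p\iint_{|v(s)-v(t)|>\delta}|s-t|^{-1-p}\,ds\,dt<\infty$ lies in $W^{1,p}(I)$ with a bound on $\|v'\|_{L^p}$, Fubini then reassembling the slices. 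The one-dimensional estimate is proved by a chaining/rearrangement argument: if $|v(s)-v(t)|>k\delta$, subdividing $[s,t]$ into $k$ equal pieces produces by telescoping a subinterval across which $v$ oscillates by more than $\delta$, and the extremal configuration realizing a prescribed number $N$ of disjoint amplitude-$\delta$ oscillations is a monotone staircase, for which $I_{\delta,p}\gtrsim N\delta^p$ while $\|v'\|_{L^p}^p\lesssim N\delta^p$. An alternative route bounds $I_{\delta,p}(u,\Omega)$ from below by a Bourgain--Brezis--Mironescu functional $\iint\frac{|u(x)-u(y)|^p}{|x-y|^p}\rho_\delta(|x-y|)\,dx\,dy$ for a family $(\rho_\delta)$ obeying \eqref{rho-n} and then invokes Proposition~\ref{proBBM}; the pointwise inequality $\delta^p\mathds{1}_{|t|>\delta}\,r^{-d-p}\gtrsim |t|^p r^{-p}\rho_\delta(r)$ is false as stated but holds after averaging $\delta$ over a dyadic window, so that the truncation height adapts to $|t|$. \textbf{The main obstacle is this lower bound}: $I_{\delta,p}$ detects only oscillations of amplitude exceeding $\delta$, so one must rule out that $\nabla u$ is large while being assembled purely from sub-$\delta$ oscillations — it is exactly the chaining/rearrangement step, together with the compactness supplied by Fréchet--Kolmogorov when passing from finite differences back to $\nabla u$, that handles this.
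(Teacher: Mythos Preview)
The paper does \emph{not} prove Proposition~\ref{proPrincipal}: it is quoted as a known result, with the attribution ``It was shown \cite[Theorems 2 and 5]{NgSob1} and \cite[Theorem 1]{BoNg1} that\ldots'' immediately preceding the statement. So there is no proof in the paper to compare your attempt against.

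That said, a few remarks on your sketch relative to the original arguments in \cite{NgSob1,BoNg1}. Your treatment of \eqref{estimate1.1} via the pointwise maximal inequality $|u(x)-u(y)|\le C_d|x-y|(M|\nabla u|(x)+M|\nabla u|(y))$ is correct and is essentially the argument used in \cite{NgSob1}. For the limit \eqref{equality1}, your computation for smooth $u$ is fine; you are right that the passage from $C^2$ to general $W^{1,p}$ is the delicate point, and the approximate-differentiability route you indicate is in the spirit of \cite{NgSob1}, though carrying it out requires more than you have written (in particular a quantitative control of the ``bad'' set uniformly over the relevant $h$-scales, which is where most of the work lies).

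For the necessity, your outline is honest about the difficulty but does not resolve it. The one-dimensional chaining/rearrangement idea you sketch is the correct heuristic, but the sentence ``the extremal configuration\ldots is a monotone staircase, for which $I_{\delta,p}\gtrsim N\delta^p$ while $\|v'\|_{L^p}^p\lesssim N\delta^p$'' is not a proof: you have not explained why an arbitrary $v$ can be compared to such a staircase, nor how this yields the uniform difference-quotient bound you need. The averaging-in-$\delta$ trick to reduce to Proposition~\ref{proBBM} is closer to something that can be made rigorous, and indeed the proof in \cite{BoNg1} proceeds in that spirit, but again the inequality you want after averaging needs to be stated and checked carefully. In short: the easy direction and the upper bound are essentially complete; the converse implication remains at the level of a plausible plan.
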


The case $p=1$ is more delicate. One has \cite[Theorem 8]{NgSob1} (see also \cite[Proposition 2]{BreHmn}),  for $u \in W^{1, 1}(\Omega)$, 
\begin{equation*}
\liminf_{\delta \to 0} I_{\delta, 1} (u, \Omega) \ge K_{d,1} \int_{\Omega} |\nabla u| \, dx  
\end{equation*}
and  (see \cite[Theorem 8]{NgSob1} and \cite[Theorem 1]{BoNg1})  that  $u \in BV(\Omega)$ provided that  $u \in L^1(\Omega)$ and  $\liminf_{\delta \to 0} I_{\delta, 1}(u, \Omega) <  + \infty$. 
Let $B_r$ denote the ball centered at 0 and of radius $r$. An example due to Augusto Ponce presented in \cite{NgSob1} showed that there exists $u \in W^{1, 1}(B_1)$  such that $\lim_{\delta \to 0} I_{\delta, 1}(u, B_1) = + \infty$.  When $d=1$,  there exists $u \in W^{1,1}(0, 1)$ \cite[Pathology 2]{BreHmn}  such that 
\begin{equation*}
K_{1,1} \int_{0}^1 |\nabla u| \, dx  = \liminf_{\delta \to 0} I_{\delta, 1} (u, (0, 1)) < \limsup_{\delta \to 0} I_{\delta, 1}(u, (0, 1)) = + \infty. 
\end{equation*}
It turns out that the concept of $\Gamma$-convergence  fits very well this setting.  It was shown \cite{NgGammaCRAS, NgGamma} that the $\Gamma$-limit exists for $p \ge 1$.  Surprisingly, the $\Gamma$-limit,  which is positive,  is strictly less than the pointwise limit \cite{NgGamma, NgGammaCRAS}. The quantity  $I_{\delta, 1}$ has a similar form with  non-local filters using in denoising process \cite{BuadesCollMorel2},  in particular with  Yaroslavsky's ones \cite{Yaroslavsky1, YaroslavskyEden}.  A discussion on a connection between nonlocal filters using $I_{\delta, 1}$ and local ones involving the total variations via the $\Gamma$-convergence theory 
 is given in \cite[Section 5.2]{BreHmn}.   Further interesting investigations related to the $\Gamma$-limit of $I_{\delta, p}$ are  given in \cite{AGMP1, AGMP2, AGP,BreHmn}.  

\medskip
One can obtain new and improved  variants of Poincar\'e's inequality, Sobolev's inequality and Rellich-Kondarachov's compactness criterion using the information of $I_{\delta, p}$ instead of the one of  the gradient  \cite[Theorems 1, 2, and 3]{NgSob3}.  Concerning the Sobolev inequality, one has 

\begin{pro} Let $1 < p < d$ and set $q  = d p / (d - p)$ and fix $\delta > 0$ arbitrary. We have, for $u \in L^p(\mR^d)$,  
\begin{equation}\label{pro-Sobolev} 
\Big(\int_{|u| > \lambda \delta} |u|^q \Big)^{1/q} \le C \Big( I_{\delta, p}  (u, \mR^d) \Big)^{1/p}, 
\end{equation}
for some positive constants $\lambda$ and $C$ independent of $u$. 
\end{pro}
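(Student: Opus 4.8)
The plan is to turn the finiteness hypothesis into quantitative control of the superlevel sets of $|u|$ via a truncated Poincar\'e inequality on cubes, and then to assemble those local estimates into the global Sobolev bound by a dyadic decomposition of the range of $u$ together with a Calder\'on--Zygmund/Whitney decomposition of $\mR^d$.

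\emph{Reductions.} First I would pass to $u\ge 0$: since $\big||u|(x)-|u|(y)\big|\le |u(x)-u(y)|$ one has $I_{\delta,p}(|u|,\mR^d)\le I_{\delta,p}(u,\mR^d)$, while the left-hand side of the asserted inequality involves only $|u|$. Writing $U_k:=\{u>2^k\delta\}$ and choosing $k_0$ with $2^{k_0}\sim\lambda$, the layer-cake formula gives $\int_{u>\lambda\delta}u^q\le C\sum_{k\ge k_0}(2^k\delta)^q|U_k|$. Since $q=dp/(d-p)>p$, the exponent $q/p$ is $\ge 1$, so $\sum_k a_k^{q/p}\le\big(\sum_k a_k\big)^{q/p}$; hence it suffices to split $I_{\delta,p}(u,\mR^d)$ into boundedly overlapping pieces $\mathcal I_k$ with $(2^k\delta)^p|U_k|^{p/q}\le C\,\mathcal I_k$.

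\emph{Two building blocks.} The first is a truncated Poincar\'e inequality: for every cube $Q$ of side $h$ there is a constant $c_Q$ (a median of $u$ on $Q$) with
\begin{equation*}
\big|\{x\in Q:\ |u(x)-c_Q|>\delta\}\big|\ \le\ C_{d,p}\,\delta^{-p}h^p\,I_{\delta,p}(u,Q).
\end{equation*}
By scaling one may take $h=1$: for $c_Q$ a median, $\{u<c_Q\}\cap Q$ has measure $\ge|Q|/2$, every pair $(x,y)$ with $u(x)>c_Q+\delta$ and $u(y)<c_Q$ is counted in $I_{\delta,p}(u,Q)$, and $|x-y|^{-d-p}\ge(\sqrt d\,)^{-d-p}$ on $Q\times Q$; this bounds $|\{u>c_Q+\delta\}\cap Q|$, and symmetrically $|\{u<c_Q-\delta\}\cap Q|$. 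Applied with $\delta$ replaced by $2^{k-1}\delta$, and combined with $I_{2^{k-1}\delta,p}(u,Q)\le 2^{(k-1)p}I_{\delta,p}(u,Q)$ (because $\{|u(x)-u(y)|>2^{k-1}\delta\}\subseteq\{|u(x)-u(y)|>\delta\}$), it yields, on any cube $Q$ with $c_Q\le 2^{k-1}\delta$,
\begin{equation*}
|U_k\cap Q|\ \le\ C_{d,p}\,\delta^{-p}\,h^p\,I_{\delta,p}(u,Q).
\end{equation*}
The second block is a separation estimate: for $s\ge t+\delta$ with $t\ge\delta$, using $\{u>s\}\times\{u\le t\}\subseteq\{|u(x)-u(y)|>\delta\}$ and the elementary potential bound $\inf_{x\in F}\int_{\mR^d\setminus F}|x-y|^{-d-p}\,dy\ge c_{d,p}|F|^{-p/d}$ for any set $F$ with $|F|<\infty$ (integrate over $\{|x-y|>c|F|^{1/d}\}$ and discard the part of $F$ lying there), taking $F=\{u>t\}$ gives
\begin{equation*}
|\{u>s\}|\ \le\ C_{d,p}\,\delta^{-p}\,|\{u>t\}|^{p/d}\,I_{\delta,p}(u,\mR^d).
\end{equation*}

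\emph{Assembly, and the main obstacle.} For each $k$ I would run a Calder\'on--Zygmund/Whitney decomposition of $\mathbf 1_{U_{k-1}}$ at level $\tfrac12$ (legitimate since $|U_{k-1}|<\infty$ as $u\in L^p$): a disjoint family $\mathcal Q_k$ of dyadic cubes covering $U_{k-1}$, hence $U_k$, up to a null set, with $|U_{k-1}\cap Q|>\tfrac12|Q|$ but $|U_{k-1}\cap\widehat Q|\le\tfrac12|\widehat Q|$ for the dyadic parent $\widehat Q$. Then the median of $u$ on $\widehat Q$ is $\le 2^{k-1}\delta$, so the truncated Poincar\'e inequality on $\widehat Q$ gives $|U_k\cap Q|\le|U_k\cap\widehat Q|\le C_{d,p}\,\delta^{-p}\ell(Q)^p\,I_{\delta,p}(u,\widehat Q)$; feeding this into $|U_k|^{p/q}\le\sum_{Q\in\mathcal Q_k}|U_k\cap Q|^{p/q}$ (valid since $p/q\le 1$) and arranging the stopping rule so that $\ell(Q)$ is tied to the natural scale of $U_k$ near $Q$ — making the scaling factors $2^k\delta$, $\delta$, $\ell(Q)$ match — reduces everything to showing $\sum_k\sum_{Q\in\mathcal Q_k}I_{\delta,p}(u,\widehat Q)\lesssim I_{\delta,p}(u,\mR^d)$. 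I expect this overlap control to be the heart of the proof, and the reason it is nontrivial is structural: a decomposition of $I_{\delta,p}(u,\mR^d)$ that merely separates the \emph{levels} of $u$ overlaps unboundedly, since a single bad pair with $u(x)\gg u(y)$ straddles $\sim\log_2(u(x)/u(y))$ dyadic levels; the remedy is to make the stopping time separate the relevant \emph{spatial scales} as well, and the separation estimate above is exactly what supplies the geometric decay of $|U_k|$ that makes such a separation possible and keeps the total overlap bounded. (An alternative route, requiring comparable bookkeeping, is to slice along lines, reduce to a one-dimensional statement, and recombine via a Loomis--Whitney/Gagliardo--Nirenberg argument.)
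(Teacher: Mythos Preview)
The paper does not give a proof of this proposition; it is quoted from \cite{NgSob3}, and the only description of the argument offered is that it combines the Poincar\'e inequality \eqref{pro-Poincare}, the Fefferman--Stein theory of sharp functions \cite{FS}, and Mazya's truncation method \cite{Ma}. Your two building blocks are sound --- the median-based weak Poincar\'e estimate on cubes is correct, and your separation estimate is essentially Lemma~\ref{lem1} of the paper with $D=\{u>s\}$, $E=\{u>t\}$, $F=\mR^d$ --- and the layer-cake reduction is fine. But you replace the sharp-function machinery by a hands-on Calder\'on--Zygmund decomposition at each dyadic level of $u$, and this is where the argument stalls.

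The gap is exactly the one you flag: the bound $\sum_k\sum_{Q\in\mathcal Q_k}I_{\delta,p}(u,\widehat Q)\lesssim I_{\delta,p}(u,\mR^d)$ is not established, and the obstruction you describe is real. A single pair $(x,y)$ with a large jump contributes to $I_{\delta,p}(u,\widehat Q)$ at roughly $\log_2(|u(x)-u(y)|/\delta)$ levels, and nothing in your stopping rule (``arranging $\ell(Q)$ to match the natural scale of $U_k$'') forces the parents $\widehat Q$ arising at different $k$ to be spatially separated; the Calder\'on--Zygmund families for $\mathbf 1_{U_{k-1}}$ at consecutive $k$ may well coincide. This is precisely the bookkeeping that the Fefferman--Stein route absorbs: there one never sums localized pieces of $I_{\delta,p}$ over levels, but instead uses \eqref{pro-Poincare} to control the sharp function of (truncations of) $u$ by a single maximal-type quantity built from $I_{\delta,p}$, invokes the Fefferman--Stein inequality $\|u\|_{L^q}\lesssim\|u^{\#}\|_{L^q}$ once, and then applies Mazya's truncation to pass from weak to strong type and to discard the additive $\delta^p$ term on $\{|u|\le\lambda\delta\}$. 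The overlap that blocks your direct summation is hidden inside the good-$\lambda$ proof of the Fefferman--Stein inequality, where it is handled once and for all. If you wish to avoid that machinery, the one-dimensional slicing plus Gagliardo--Nirenberg alternative you mention at the end can be made to work, but it is not lighter.
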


Concerning the Poincar\'e inequality, one obtains 

\begin{pro}
Let $d\geq 1$, $p\geq 1$, $\delta> 0$,  let $B$ be an open ball of $\mR^d$, and let $u \in L^p(B)$.  There exists a positive constant $C_{d,p}$ depending only on $d$ and $p$ such that 
\begin{equation} \label{pro-Poincare}
\int_B\int_B |u(x)-u(y)|^pdxdy \leq C_{d,p} \bct{|B|^{\frac{d+p}{d}}I_{\delta,p}(u,B)+ \delta^p|B|^2}.
\end{equation}
\end{pro}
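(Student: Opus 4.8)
The plan is to reduce to a scale‑normalised, one‑function statement and then run a layer–cake argument against a geometric lower bound for $I_{\delta,p}$. First, the dilation $x\mapsto(\operatorname{rad}B)\,x$ multiplies each of $\int_B\int_B|u-u|^p$, $|B|^{(d+p)/d}I_{\delta,p}(u,B)$ and $\delta^p|B|^2$ by the same power of $\operatorname{rad}B$, so we may assume $B$ is the unit ball. Next, fixing a median $c$ of $u$ on $B$ (so $|\{u>c\}|\le|B|/2$ and $|\{u<c\}|\le|B|/2$) and using $|a-b|^p\le 2^{p-1}(|a-c|^p+|c-b|^p)$, integration gives $\int_B\int_B|u(x)-u(y)|^p\,dx\,dy\le 2^p|B|\int_B|u-c|^p$, so it suffices to prove $\int_B|u-c|^p\,dx\le C_{d,p}\big(|B|^{p/d}I_{\delta,p}(u,B)+\delta^p|B|\big)$. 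Replacing $u$ by $-u$ if necessary and writing $w:=(u-c)^+\ge 0$ (so $|\{w>0\}|\le|B|/2$, and $w\in L^p(B)$ because $u\in L^p(B)$), it is enough to bound $\int_B w^p$.

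For this I would use $\int_B w^p=p\int_0^\infty t^{p-1}|\{w>t\}|\,dt$, bounding the range $0<t\le\delta$ trivially by $\delta^p|\{w>0\}|\le\delta^p|B|$. The heart of the proof is a pointwise lower bound for the nonlocal energy density $g(x):=\int_B \dfrac{\delta^p\,\mathbf 1_{|u(x)-u(y)|>\delta}}{|x-y|^{d+p}}\,dy$, valid for a.e.\ $x$ with $w(x)>\delta$: every $y$ with $w(y)<w(x)-\delta$ satisfies $|u(x)-u(y)|>\delta$, and since $|\{w\ge w(x)-\delta\}|\le|\{w>0\}|\le|B|/2$ one may choose $\rho_x\simeq_d|\{w\ge w(x)-\delta\}|^{1/d}$ with $|\{y\in B(x,\rho_x)\cap B:\ w(y)<w(x)-\delta\}|\ge\tfrac12|B(x,\rho_x)\cap B|\gtrsim_d\rho_x^d$, whence
\[
g(x)\ \ge\ \frac{\delta^p}{\rho_x^{\,d+p}}\,\big|\{y\in B(x,\rho_x)\cap B:\ w(y)<w(x)-\delta\}\big|\ \gtrsim_d\ \delta^p\,\big|\{w\ge w(x)-\delta\}\big|^{-p/d}.
\]
Integrating in $x$ against $\int_B g=I_{\delta,p}(u,B)$ and reading this along the dyadic levels $t=2^j\delta$ produces a recursion $m_{j+1}\lesssim_d \delta^{-p}I_{\delta,p}(u,B)\,m_j^{\,p/d}$ for $m_j:=|\{w>2^j\delta\}|$, together with the crude companion bound $m_j\lesssim_d \delta^{-p}|B|^{p/d}I_{\delta,p}(u,B)$ coming from the diameter estimate $|x-y|\le\operatorname{diam}B$.

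The step I expect to be the main obstacle is converting these per‑level estimates into the \emph{summed} bound $\sum_{j\ge0}2^{jp}\delta^p m_j\lesssim_{d,p}|B|^{p/d}I_{\delta,p}(u,B)$: the recursion alone does not force $m_j$ to decay fast enough (it has a nonzero fixed point $\simeq(\delta^{-p}I_{\delta,p})^{d/(d-p)}$), so one must exploit that a fixed admissible pair $(x,y)$ is seen by $g$ at \emph{all} dyadic levels between $w(y)$ and $w(x)$ — i.e.\ run the geometric estimate at all scales simultaneously and telescope, recovering the $2^{jp}$ weights from a single copy of $I_{\delta,p}$. For the range $1<p<d$ this difficulty can instead be bypassed using the Sobolev‑type inequality \eqref{pro-Sobolev}: applying it to a suitable extension $V$ of $u-c$ to $\mathbb R^d$ with $V|_B=u-c$ and $I_{\delta,p}(V,\mathbb R^d)\lesssim_{d,p}I_{\delta,p}(u,B)+\delta^p|B|^{(d-p)/d}$ yields, with $q=dp/(d-p)$,
\[
\Big(\int_{B\cap\{|u-c|>\lambda\delta\}}|u-c|^q\Big)^{1/q}\ \lesssim_{d,p}\ \big(I_{\delta,p}(u,B)+\delta^p|B|^{(d-p)/d}\big)^{1/p},
\]
and Hölder's inequality with exponents $q/p$ and $(q/p)'$ inserts exactly the factor $|B|^{1-p/q}=|B|^{p/d}$, which is precisely what is needed since $|B|^{(d+p)/d}=|B|\cdot|B|^{p/d}$; the layer $\{|u-c|\le\lambda\delta\}$ contributes $\lesssim\delta^p|B|$.

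Finally, the cases $p=1$ and $p\ge d$ need separate, more delicate arguments in the same spirit: for $p=1$ one works with $BV$ and the perimeters of the level sets of $w$ rather than their measures; for $p\ge d$ (the borderline Moser--Trudinger and supercritical Morrey regimes, which are the focus of the paper) one combines truncation with a subcritical exponent $p_0<d$ via the elementary inequality $I_{\delta,p_0}(u,B)\le(2\operatorname{rad}B/\delta)^{p-p_0}\,I_{\delta,p}(u,B)$ together with the Morrey/exponential‑integrability estimates developed here. In every case the asserted inequality follows by multiplying the resulting bound for $\int_B|u-c|^p$ back by $2^p|B|$.
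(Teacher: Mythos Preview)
Your proposal has a fatal circularity. The paper states explicitly (in the paragraph immediately following the two propositions) that the Sobolev inequality \eqref{pro-Sobolev} is \emph{derived from} the Poincar\'e inequality \eqref{pro-Poincare}, not conversely; and the BMO bound underpinning Theorems~\ref{thm1} and \ref{thm2} is itself presented as a consequence of \eqref{pro-Poincare}. Hence your fallback for $1<p<d$ (apply \eqref{pro-Sobolev} to an extension and then H\"older) and your fallback for $p\ge d$ (invoke ``the Morrey/exponential-integrability estimates developed here'') both assume the very inequality you are trying to prove. Stripped of these, what remains is the level-set recursion $m_{j+1}\lesssim \delta^{-p}I_{\delta,p}(u,B)\,m_j^{\,p/d}$ together with the ``main obstacle'' you yourself flag: you give no mechanism for summing $\sum_{j\ge 0}2^{jp}\delta^p m_j$ against a \emph{single} copy of $I_{\delta,p}(u,B)$. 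The telescoping remark is a hope, not a proof---as written, the geometric lower bound at each level consumes its own copy of $I_{\delta,p}$, and for $p\le d$ the recursion does not even force $m_j\to 0$.

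For comparison, the route the paper points to (carried out in \cite{BoNg1,NgSob3}) is of a different nature: it proceeds through John--Nirenberg's inequality rather than a layer-cake/Sobolev bootstrap. The geometric density estimate you start from (essentially Lemma~\ref{lem1}) is the common ingredient, but in the cited proof it feeds a Calder\'on--Zygmund stopping-time iteration producing exponential decay of the distribution function of $u-u_B$; the disjointness of the stopping balls, together with the trivial monotonicity $I_{\delta,p}(u,Q)\le I_{\delta,p}(u,B)$ for sub-balls $Q\subset B$, is precisely what lets one control all levels with a single copy of $I_{\delta,p}(u,B)$---the step your outline leaves open.
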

The proof of Sobolev's inequality \eqref{pro-Sobolev} is based on the one of Poincar\'e's inequality \eqref{pro-Poincare} and uses the theory of sharp functions due to Charles Fefferman and Elias Stein \cite{FS} and the method of truncation due to Vladimir Mazya \cite{Ma}. The proof of Poincar\'e's inequality \eqref{pro-Poincare} has its roots in \cite{BoNg1} and uses John-Nirenberg's inequality \cite{JN}. 

\begin{rem} For a measurable function defined in $B$, by applying \eqref{pro-Poincare} for $u_k$ with \\$u_k = \min\big\{k, \max\{u, -k\} \big\}$ and letting $k\to + \infty$, one also obtains \eqref{pro-Poincare} for measurable functions.  
\end{rem}

With Marco Squassina, the second author also established  new and improved variants of Hardy and Caffarelli, Kohn, Nirenberg's inequality \cite{Ng-Squ1} using the quantity $I_{\delta, p}$. The approach used in \cite{Ng-Squ1} does not involve  the integration-by-parts arguments and can be extended for the fractional Sobolev spaces  \cite{Ng-Squ2}.  Other investigations related to $I_{\delta, p}$ can be found in \cite{BrezisNguyen, BreHmn, NgSob2, NgSob3,Ng-Squ2, Ng-Squ3, Ng-Squ4}.

Let   $\Omega$ be a smooth bounded open subset of $\mR^d$ and  $p \ge  d$. It follows from \eqref{pro-Poincare} that $u \in BMO(\Omega)$ provided that  $u \in L^1(\Omega)$ and $I_{\delta, p}(u, \Omega) < + \infty$. More precisely, one has, for $p \ge d$,  
\begin{equation*}
\| u \|_{BMO(\Omega)} \le C_{\Omega} \Big( I_{\delta, p}(u, \Omega) + \delta^d \Big), 
\end{equation*}
where
\begin{equation}\label{def-BMO}
\| u\|_{BMO(\Omega)} : = \sup_{\mathrm{ ball } \; B \subset \Omega} \frac{1}{| B|} \int_{B} |u - u_{B}| \, dx. 
\end{equation}
Here, for a given a measurable set $O$ of $\mR^d$ and a function $u \in L^1(O)$, one sets
\begin{equation}
|O| : = \mbox{meas} (O) \quad \mbox{ and } \quad u_O =\fint_{O} u \, dx \mbox{ with } \fint_{O} u \, dx : =  \frac{1}{|O|} \int_O u \, dx.
\end{equation}
One can then derive the exponential integrability of $u$ from John-Nirenberg's  inequality: 
\begin{equation}\label{John-Nirenberg}
\fint_{B} e^{c |u - u_B|/ \| u \|_{BMO(B)} } \le C,  
\end{equation}
for some positive constant $c$ and $C$ depending only on $d$ and for any open ball $B$. 

Using the Poincar\'e inequality, one can prove that $u \in W^{1, p}(\Omega)$ then $u \in BMO(\Omega)$,  this  yields the exponential integrability of $u$ in \eqref{John-Nirenberg}. In fact,  for $u \in W^{1, p}(\Omega)$ with $p \ge d$, one can improve \eqref{John-Nirenberg}. First,  Morrey's inequality (see, e.g., \cite{BrezisF}) states that  $u \in C^{\alpha}(\Omega)$ with $\alpha = 1 - d/ p$ if $u \in W_0^{1, p}(\Omega)$ for $p> d$. Second,  Moser-Trudinger's inequality \cite{Mos, Tru,Pet,Poh} confirms that 
\begin{equation*}
\sup_{\| u\|_{W_0^{1, d}(\Omega) \le 1}} \int_{\Omega} e^{\alpha |u|^{d/ (d-1)}} \le C, 
\end{equation*}
for some positive constants $\alpha$ and $C$ depending only on $\Omega$. 

The goal of this paper is to understand  whether or not a better integrability  property of $u$ than \eqref{John-Nirenberg} inequality holds when $u \in L^p(\Omega)$ and $I_{\delta, p}(u, \Omega) < + \infty$.  It is worth noting that, for all $\delta > 0$, there exists $u \in L^\infty(\Omega) \setminus C(\bar \Omega)$ such that $I_{\delta, p}(u, \Omega) = 0$ for all $p \ge 1$. A simple example is the function $
u = \delta \mathds{1}_{B} \mbox{ in } \Omega, $ for some ball $B \Subset  \Omega$, where $\mathds{1}_O$ denotes the characteristic function of a subset $O$ of $\mR^d$. One can also show that there exists a function $u$ such that $I_{\delta}(u, \Omega) < + \infty$ and $u \not \in L^\infty(\Omega)$. An example for this is the function $u (x) = \bct{\ln \lambda}^{-1}\ln \ln |x|^{-1}$ for $x \in B_{1/e}$ and $\lambda > p/d$ (the verification is given in Section~\ref{sect-verification}).

In this work, we address the gap between the exponential integrability  \eqref{John-Nirenberg} and the boundedness  for functions $u$ with   $I_{\delta,p}(u, \Omega) < + \infty$ for some $\delta > 0$ and $p \ge d$. Our first result is 

 \begin{thm}\label{nonloc_supcr} \label{thm1}
Let $p> d \ge 1$, $\delta > 0$,  and let  $B$ be a an open ball of $\mR^d$. We have, 

$i)$ for $M> 0$ and $\alpha>0$, there exists a constant $0 \le \beta = \beta (\alpha, M) \le 1$ depending only on $M$ and  $\alpha$  such that 
\begin{align}\label{thm1-i}
\sup_{ |B|^{\frac{p-d}{d}} \delta^{-p} I_{\delta,p}(u,B) \leq M}  \fint_B e^{\alpha \bct{\frac{p}{d}}^{\beta \delta^{-1} |u-u_B|}} dx \le C, 
\end{align}

$ii)$ given $\alpha>0$, there exists a positive constant $M_0$ (small) depending only on $\alpha$, $d$, and $p$ such that 
\begin{align}\label{thm1-ii}
\sup_{ |B|^{\frac{p-d}{d}} \delta^{-p} I_{\delta,p}(u,B) \leq M_0}  \fint_B e^{\alpha \bct{ \frac{p}{d}}^{ \delta^{-1} |u-u_B|}} dx \le C. 
\end{align}
Here $C$  denotes a positive constant depending only on $d$, $p$, and $\alpha$.  

\end{thm}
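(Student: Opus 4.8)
The plan is to reduce the two statements to a \emph{double‑exponential decay} of the distribution function of $u-u_B$ and to produce that decay by playing the non‑local lower bound on super‑level sets of $u$ against the Poincaré inequality \eqref{pro-Poincare}. After subtracting the mean we may assume $u_B=0$; write $v:=\delta^{-1}u$, $\theta:=p/d>1$, and for a ball $B'\subseteq B$ set $\mathcal E(B'):=|B'|^{\frac{p-d}{d}}\delta^{-p}I_{\delta,p}(u,B')$, so that $\mathcal E(B')\le\mathcal E(B)$ because $(p-d)/d>0$ and $I_{\delta,p}(u,\cdot)$ is monotone in the domain. By the layer–cake formula, for $F(s)=e^{\alpha\theta^{\beta s}}$ one has
\[
\fint_B e^{\alpha\theta^{\beta|v|}}\,dx=e^{\alpha}+\alpha\beta(\ln\theta)\int_0^\infty \theta^{\beta s}e^{\alpha\theta^{\beta s}}\,\Phi(s)\,ds,\qquad \Phi(s):=\frac{|\{x\in B:\,|v(x)|>s\}|}{|B|}.
\]
Since $\Phi\le\Phi^{+}+\Phi^{-}$ where $\Phi^{\pm}$ are the normalised distribution functions of $\pm\delta^{-1}u$, it suffices to bound each of them; the target decay (which is exactly what makes the exponent base $p/d$ natural, cf.\ the example $u=(\ln\lambda)^{-1}\ln\ln|x|^{-1}$, $\lambda>p/d$) will be $\Phi^{\pm}(s)\lesssim \exp(-c\,\theta^{\beta s})$ with the appropriate $c$.

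\textbf{Two ingredients.} First, \eqref{pro-Poincare} applied on any sub‑ball $B'\subseteq B$, together with Jensen, gives $\fint_{B'}|u-u_{B'}|^p\le \fint_{B'}\fint_{B'}|u(x)-u(y)|^p\le C_{d,p}\delta^p(\mathcal E(B')+1)\le C_{d,p}\delta^p(M+1)$; so, with $E_k:=\{x\in B:u(x)>k\delta\}$, $b_k:=|E_k|/|B|$, Chebyshev yields $b_k\le C_{d,p}(M+1)k^{-p}$ (symmetrically for $b_k^{-}$). This is the baseline decay, valid on a bounded range of $k$ only. Second — the non‑local step — if $x\in E_{k+1}$ and $y\in B\setminus E_k$ then $|u(x)-u(y)|>\delta$, hence $I_{\delta,p}(u,B)\ge \delta^p\int_{E_{k+1}}\!\int_{B\setminus E_k}|x-y|^{-d-p}\,dy\,dx$. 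Defining $\rho_k$ by $|B(x,\rho_k)|=2^{d+1}|E_k|/\omega_d$, as long as $b_k\le 2^{-d-1}$ one has $\rho_k\le\mathrm{rad}(B)$, so the elementary "fatness'' estimate $|B(x,\rho)\cap B|\ge 2^{-d}\omega_d\rho^d$ (valid for every ball $B$, every $x\in\bar B$ and every $\rho\le\mathrm{rad}(B)$) gives $|B(x,\rho_k)\cap(B\setminus E_k)|\ge 2|E_k|-|E_k|=|E_k|$, whence $\int_{B\setminus E_k}|x-y|^{-d-p}\,dy\ge \rho_k^{-d-p}|E_k|=c_{d,p}|E_k|^{-p/d}$ for \emph{every} $x\in B$ (this is how the boundary of $B$ is dealt with). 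Plugging this in and using $\delta^{-p}I_{\delta,p}(u,B)=|B|^{\frac{d-p}{d}}\mathcal E(B)\le M|B|^{\frac{d-p}{d}}$ yields the nonlinear recursion
\[
b_{k+1}\ \le\ C_2 M\,b_k^{\theta}\qquad\text{whenever }b_k\le 2^{-d-1},
\]
with $C_2=C_2(d,p)$, and likewise for $b_k^{-}$.

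\textbf{Iteration and the two regimes.} By the baseline decay, $b_k\le\min\{2^{-d-1},\tfrac12(C_2M)^{-1/(\theta-1)}\}$ for all $k\ge K_0$, where $K_0=K_0(M,d,p)\asymp M^{1/(p-d)}$ (and $K_0\asymp1$ when $M$ stays bounded). For $k\ge K_0$ the recursion is self‑improving: setting $b_{K_0+j}=\tfrac12(C_2M)^{-1/(\theta-1)}\gamma_j$ one gets $\gamma_{j+1}\le 2^{1-\theta}\gamma_j^{\theta}$, hence $\gamma_j\le 2^{-(\theta^j-1)}$ and $\Phi^{\pm}((K_0+j)\delta)\le 2^{-\theta^{j}}\max\{1,(C_2M)^{-1/(\theta-1)}\}$. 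For part (ii): if $M\le M_0$ with $C_2M_0<1$, then $b_k\le 2^{-d-1}$ already for $k\ge K_1=K_1(d,p)$ (bounded independently of $M_0$), and iterating from $K_1$ gives $\Phi^{\pm}(s)\lesssim \exp(-c_0\theta^{s})$ with $c_0=\tfrac{|\ln(C_2M_0)|}{2(\theta-1)}\theta^{-K_1}\to\infty$ as $M_0\downarrow0$; choosing $M_0=M_0(\alpha,d,p)$ small enough forces $c_0>\alpha$. A layer–cake computation with $\beta=1$ (splitting at $s=K_1$, where the first piece is a finite number depending only on $d,p,\alpha$ and the second has integrand $\lesssim\theta^{s}e^{(\alpha-c_0)\theta^{s}}$) then proves (ii).

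\textbf{Proof of (i) and the main obstacle.} Put $\beta=\beta(M,\alpha):=\min\{1/2,\,1/K_0\}$, so that $\theta^{\beta s}\le\theta$ for $s\in[0,K_0]$. Then
\[
\int_0^{K_0}\theta^{\beta s}e^{\alpha\theta^{\beta s}}\Phi(s)\,ds\ \le\ \theta e^{\alpha\theta}\int_0^{K_0}\Phi(s)\,ds\ \lesssim_{d,p,\alpha}\ M^{1/p}
\]
by the baseline decay, while $\int_{K_0}^\infty\theta^{\beta s}e^{\alpha\theta^{\beta s}}\Phi(s)\,ds\lesssim_{d,p,\alpha}K_0$ by the double‑exponential decay together with $\beta<1$ (which makes $\theta^{s}$ beat $\theta^{\beta s}$ in the exponent and keeps the relevant integral convergent). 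Multiplying by the prefactor $\alpha\beta\ln\theta\asymp \alpha(\ln\theta)/K_0$ turns \emph{both} contributions into $O_{d,p,\alpha}(1)$ — for $M\ge1$; the range $M\le1$ is absorbed into (ii) up to constants — hence $\fint_B e^{\alpha\theta^{\beta|v|}}\le C(d,p,\alpha)$. The delicate point of the whole argument is precisely this last bookkeeping: although the "bad range'' $[0,K_0]$, on which only the polynomial (Poincaré) decay is available, has length $\asymp M^{1/(p-d)}\to\infty$, the necessarily small choice $\beta\asymp1/K_0$ produces a prefactor that exactly cancels this growth, so that $C$ is independent of $M$; identifying the correct exponent $p/d$ in the Step‑2 recursion and handling the boundary of $B$ via the fatness estimate are the other two places requiring care.
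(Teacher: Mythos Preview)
Your argument is correct and shares the paper's core idea—a nonlinear recursion $b_{k+1}\lesssim \mathrm{const}\cdot b_k^{\,p/d}$ on super-level sets that produces the double-exponential decay $\Phi(s)\lesssim e^{-c\theta^s}$—but the implementation differs in three places. First, the paper extends $u$ by reflection to $B_{3/2}$ (so that $B_{2\rho}(x)\subset B_{3/2}$ for all relevant $x$) and then invokes a geometric lemma (Lemma~\ref{lem1}) to get the lower bound on $\int_{F\setminus E}|x-y|^{-d-p}\,dy$; you avoid the extension entirely and handle the boundary of $B$ directly via the fatness estimate $|B(x,\rho)\cap B|\ge 2^{-d}|B_\rho|$. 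Second, the paper obtains the initial smallness of the super-level set from John--Nirenberg's inequality, while you obtain a quantitative polynomial baseline $b_k\lesssim(M+1)k^{-p}$ straight from the Poincar\'e inequality~\eqref{pro-Poincare}. Third, and this is the main structural difference, the paper makes the recursion contractive by using a dyadic halving trick (Lemma~\ref{lem2}) to replace $I_\delta$ by $I_{2^k\delta}\le 2^{-k(p-1)}I_\delta$, so that the constant in front of $b_k^{\,p/d}$ can be made $\le e^{-2\alpha}$ by choosing the step $\ell=2^{k_0}$ large; you instead keep the recursion $b_{k+1}\le C_2 M\,b_k^{\theta}$ with its $M$-dependent constant and wait, via the baseline, until $b_{K_0}\le\tfrac12(C_2M)^{-1/(\theta-1)}$ before iterating. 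Both routes give $\beta\asymp \ell_0^{-1}$ with $\ell_0$ growing like a power of $M$.

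What each buys: your argument is more self-contained (no extension lemma, no appeal to John--Nirenberg, no Lemma~\ref{lem2}) and makes the role of the Poincar\'e inequality~\eqref{pro-Poincare} transparent; the paper's route via Lemma~\ref{lem2} gives a somewhat cleaner recursion (the multiplicative constant is $\le e^{-2\alpha}$ from the start once $\ell\ge\ell_0$) and a more explicit formula for $\ell_0$. One cosmetic point: in your definition of $\rho_k$ there is a stray $/\omega_d$; you clearly intend $|B_{\rho_k}|=2^{d+1}|E_k|$, which is what makes $\rho_k\le\mathrm{rad}(B)$ equivalent to $b_k\le 2^{-d-1}$.
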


As a consequence of Theorem~\ref{thm1}, we obtain 

\begin{pro}\label{pro1}
Let $p> d \ge 1$, $\delta > 0$,  and let  $\Omega$ be a smooth bounded open subset of $\mR^d$. We have, 

$i)$ for $M> 0$ and $\alpha > 0$,  there exists a constant $0 \le \beta = \beta (\alpha, M) \le 1$ depending only on $\alpha$ and $M$ such that 
\begin{align*}
\sup_{\delta^{-p} I_{\delta,p}(u,\Omega) \leq M} \int_\Omega e^{\alpha \bct{\frac{p}{d}}^{\beta \delta^{-1}|u|}} dx \le C_\Omega e^{\alpha \bct{\frac{p}{d}}^{\beta \delta^{-1} \|u \|_{L^1(\Omega)}}},  
\end{align*}

$ii)$ given $\alpha>0$, there exists a positive constant $M_0$ (small) depending only on $\alpha$, $d$, $p$, and $\Omega$ such that 
\begin{align*}
\sup_{\delta^{-p}I_{\delta,p}(u,\Omega) \leq M_0} \int_\Omega e^{\alpha \bct{ \frac{p}{d}}^{\delta^{-1}|u|}} dx \le C_\Omega e^{\alpha \bct{ \frac{p}{d}}^{\delta^{-1}\| u\|_{L^1(\Omega)}}}.  
\end{align*}
Here $C_\Omega$ denotes a positive constant depending only on $d$, $p$, $\alpha$,  and $\Omega$. 

\end{pro}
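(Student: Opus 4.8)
The plan is to deduce Proposition~\ref{pro1} from Theorem~\ref{thm1} by covering $\Omega$ with finitely many sets that are bi-Lipschitz to a fixed ball and contained in $\Omega$, transporting \eqref{thm1-i} (resp.\ \eqref{thm1-ii}) to a genuine ball on each of them, and summing the resulting local bounds. Besides the localization, two points must be handled: replacing the oscillation $|u-u_B|$ that appears in Theorem~\ref{thm1} by $|u|$ itself — this is exactly what forces the factor $e^{\alpha(p/d)^{\beta\delta^{-1}\|u\|_{L^1(\Omega)}}}$ on the right-hand side — and arranging that the exponent $\beta$ (resp.\ the smallness threshold $M_0$) extracted from Theorem~\ref{thm1} depends only on $\alpha$, $M$ (resp.\ $\alpha$), $d$, $p$ and $\Omega$, not on the particular $u$.

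Since $\overline\Omega$ is a compact smooth manifold with boundary, there exist finitely many open sets $V_1,\dots,V_N$ with $\bigcup_j V_j=\Omega$, each $V_j\subset\Omega$, and bi-Lipschitz maps $\Phi_j\colon B\to V_j$ from a fixed ball $B$ with bi-Lipschitz constants bounded by $L=L(\Omega)$ (honest balls for the interior pieces; the boundary charts of $\Omega$ composed with a bi-Lipschitz identification of a half-ball with $B$ for the pieces meeting $\partial\Omega$). The key elementary observation is that $I_{\delta,p}$ transfers under such a change of variables \emph{with the same $\delta$}: the parameter $\delta$ enters $I_{\delta,p}$ only through the constraint $|u(x)-u(y)|>\delta$ and the prefactor $\delta^p$, both invariant under precomposition with $\Phi_j$, whereas the Jacobians and the distortion of $|x-y|^{-(d+p)}$ only cost powers of $L$; hence
\[
I_{\delta,p}(u\circ\Phi_j,B)\le C(L,d,p)\,I_{\delta,p}(u,V_j)\le C(L,d,p)\,I_{\delta,p}(u,\Omega)
\]
by monotonicity of $I_{\delta,p}$ in the set. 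Consequently, if $\delta^{-p}I_{\delta,p}(u,\Omega)\le M$ then $u\circ\Phi_j$ satisfies $|B|^{\frac{p-d}{d}}\delta^{-p}I_{\delta,p}(u\circ\Phi_j,B)\le M':=|B|^{\frac{p-d}{d}}C(L,d,p)M$, a quantity depending only on $M,d,p,\Omega$; applying \eqref{thm1-i} with $M'$ produces a single $\beta=\beta(\alpha,M')$, uniform in $j$ and in $u$, with
\[
\fint_B e^{\alpha(p/d)^{\beta\delta^{-1}|u\circ\Phi_j-(u\circ\Phi_j)_B|}}\,dx\le C ,
\]
and undoing the change of variables (again at the cost of constants controlled by $L$) gives $\int_{V_j} e^{\alpha(p/d)^{\beta\delta^{-1}|u-m_j|}}\,dx\le C$, where $m_j:=(u\circ\Phi_j)_B$ is a constant with $|m_j|\le\fint_B|u\circ\Phi_j|\le c_\Omega\|u\|_{L^1(\Omega)}$.

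To pass from $|u-m_j|$ to $|u|$, split $V_j$ according to whether $|u-m_j|\ge|m_j|$ or not. On $\{|u-m_j|\ge|m_j|\}$ one has $|u|\le 2|u-m_j|$, so — after replacing $\beta$ by $\beta/2$, which is legitimate because $p/d>1$ makes the integrand in \eqref{thm1-i} nondecreasing in the exponent, hence the bound survives for any smaller value — $(p/d)^{(\beta/2)\delta^{-1}|u|}\le(p/d)^{\beta\delta^{-1}|u-m_j|}$ and the integral of $e^{\alpha(p/d)^{(\beta/2)\delta^{-1}|u|}}$ over $V_j\cap\{|u-m_j|\ge|m_j|\}$ is $\le C$. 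On $\{|u-m_j|<|m_j|\}$ one has $|u|\le 2|m_j|\le 2c_\Omega\|u\|_{L^1(\Omega)}$, so the integral of $e^{\alpha(p/d)^{(\beta/2)\delta^{-1}|u|}}$ over $V_j\cap\{|u-m_j|<|m_j|\}$ is $\le|V_j|\,e^{\alpha(p/d)^{c_\Omega\beta\delta^{-1}\|u\|_{L^1(\Omega)}}}$. Summing over $j=1,\dots,N$ and absorbing $N$, the $|V_j|$ and the purely geometric constant $c_\Omega$ (the last one into $C_\Omega$ and into the coefficient of $\delta^{-1}\|u\|_{L^1(\Omega)}$ in the exponent, which is where the sizes of the charts, hence the smoothness of $\Omega$, enter) yields $i)$ after renaming $\beta/2$ as $\beta$. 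Part $ii)$ is obtained by running the same argument with \eqref{thm1-ii} in place of \eqref{thm1-i}: the admissible threshold $M_0$ for $\Omega$ is the one making $|B|^{\frac{p-d}{d}}C(L,d,p)M_0$ fall below the threshold of \eqref{thm1-ii}, hence it depends on $\alpha,d,p$ and the bi-Lipschitz constants of the charts, i.e.\ on $\Omega$.

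The step I expect to be the main obstacle is keeping the exponent $\beta$ (resp.\ the threshold $M_0$) and all the ancillary constants uniform over the covering and independent of $u$: this is why Theorem~\ref{thm1} must be fed the worst-case parameter $M'$ coming from the charts and why one exploits the monotonicity of the exponential quantity in $\beta$ to pay for the loss in $|u|\le 2|u-m_j|$, and it is precisely the fact that the bi-Lipschitz transfer of $I_{\delta,p}$ leaves $\delta$ untouched that prevents this bookkeeping from deteriorating the shape of the final bound.
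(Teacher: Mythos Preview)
Your argument is correct and is essentially the paper's own approach: deduce the result from Theorem~\ref{thm1} via local charts, using that bi-Lipschitz changes of variables preserve $I_{\delta,p}$ up to constants without altering $\delta$, and then absorb the mean into the right-hand side via $|m_j|\le c_\Omega\|u\|_{L^1(\Omega)}$. The paper phrases the localization slightly differently---it extends $u$ by reflection from $\Omega$ to a single ball $B\supset\Omega$ (as in \cite[Lemma~17]{BreHmn}) and applies Theorem~\ref{thm1} once on $B$ rather than covering $\Omega$ by finitely many charts and applying it on each piece---but the substance is the same and the paper omits the details you have supplied.
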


Here is a variant of $ii)$ of Theorem~\ref{thm1}.

\begin{thm} \label{thm2}Let $p = d \ge 1$, $\delta > 0$,  and let  $B$ be a an open ball of $\mR^d$. Given $\alpha>0$, there exists a positive constant $M_0$ (small) depending only on $\alpha$ and  $d$ such that 
\begin{align}\label{thm2-state-1}
\sup_{  \delta^{-d} I_{\delta,p}(u,B) \leq M_0} \fint_B e^{\alpha \delta^{-1}|u-u_B|} dx \le C, 
\end{align}
for some positive constant $C$ depending only on $d$ and $\alpha$.  
\end{thm}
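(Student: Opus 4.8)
The plan is to mimic the proof of part $ii)$ of Theorem~\ref{thm1} in the borderline case $p=d$, where the super-exponential weight $(p/d)^{\delta^{-1}|u-u_B|}$ degenerates to $1$ and must be replaced by an honest exponential $e^{\alpha \delta^{-1}|u-u_B|}$. The starting point is the Poincar\'e inequality \eqref{pro-Poincare} with $p=d$, which after normalizing $\delta=1$ (by homogeneity, replacing $u$ by $u/\delta$) and rescaling the ball to have $|B|=1$ reads
\begin{equation*}
\int_B\int_B |u(x)-u(y)|^d\,dx\,dy \leq C_d\bct{I_{1,d}(u,B)+1}.
\end{equation*}
More usefully, applying \eqref{pro-Poincare} on \emph{every} sub-ball $B'\subset B$ and controlling $I_{1,d}(u,B')\le I_{1,d}(u,B)$ gives a BMO-type bound; but to get the exponential integrability with an explicit small constant $M_0$ one wants, following \cite{BoNg1} and the John--Nirenberg machinery \cite{JN}, to extract from $I_{\delta,p}$ directly a "good-$\lambda$" or a level-set decay estimate rather than passing through the $BMO$ norm (which would only yield \eqref{John-Nirenberg} with an uncontrolled constant).

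The key steps, in order. First I would reduce to $\delta=1$ and, by a covering/scaling argument, to the unit ball or cube $B$. Second, the heart of the matter: prove a quantitative level-set estimate of the form
\begin{equation*}
\md{\Sb{x\in B:\ |u(x)-u_B|>\lambda}} \le C\, \bct{I_{1,d}(u,B)}\, e^{-c\lambda}\qquad\text{for }\lambda\ge 1,
\end{equation*}
with $c$ \emph{independent of} $I_{1,d}(u,B)$ when the latter is $\le M_0$. This is where one uses the structure of $I_{1,d}$: split $B$ dyadically, on each sub-ball $B'$ the oscillation of $u$ is controlled by $|B'|^{-1/d}$ times a power of the local energy $I_{1,d}(u,B')$ (this is exactly the $p=d$ scaling in \eqref{pro-Poincare}, where $|B'|^{(d+p)/d}=|B'|^2$ and $|B'|^{(p-d)/d}=1$ so the geometry is scale-invariant), then iterate the Calder\'on--Zygmund stopping-time decomposition à la John--Nirenberg. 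The smallness of $M_0$ is what guarantees that at each stopping step the measure shrinks by a fixed factor $<1$, producing geometric (hence exponential in $\lambda$) decay; choosing $M_0=M_0(\alpha,d)$ small makes the decay rate $c$ beat $\alpha$. Third, integrate $e^{\alpha|u-u_B|}=\alpha\int_0^\infty e^{\alpha\lambda}\md{\Sb{|u-u_B|>\lambda}}\,d\lambda + |B|$ against the level-set bound to conclude $\fint_B e^{\alpha|u-u_B|}\le C(d,\alpha)$.

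The main obstacle I expect is establishing the level-set decay with a rate independent of the energy: the Poincar\'e inequality \eqref{pro-Poincare} is a statement about the full oscillation on a fixed ball and, used naively, only gives $\fint_B|u-u_B|^d\le C(I_{1,d}(u,B)+1)$, i.e.\ an $L^d$ bound, not exponential integrability. One must genuinely run the stopping-time argument and, crucially, verify that the \emph{local} quantities $I_{1,d}(u,B')$ over a family of disjoint stopping balls $B'$ are \emph{additive}, $\sum_{B'}I_{1,d}(u,B')\le I_{1,d}(u,B)$ — which holds because $I_{\delta,p}(u,\cdot)$ is the integral of a nonnegative kernel over $O\times O$, so it is superadditive over disjoint sets. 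That additivity, combined with the scale-invariance of the $p=d$ Poincar\'e inequality, is exactly the input that makes the John--Nirenberg iteration close and forces the exponential rate to depend only on $d$ once $I_{1,d}(u,B)\le M_0$; the role of $M_0$ being small is to ensure the first-generation stopping set already has measure a definite fraction below $|B|$, seeding the geometric decay. A secondary technical point is handling the contribution of pairs $(x,y)$ with $|x-y|$ comparable to $\mathrm{diam}(B)$, which the kernel $|x-y|^{-2d}$ controls and contributes only a bounded additive term absorbed into the final constant.
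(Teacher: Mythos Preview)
You open by announcing that you will ``mimic the proof of part~$ii)$ of Theorem~\ref{thm1}'', but what you then describe --- a Calder\'on--Zygmund stopping-time iteration fed by the Poincar\'e inequality \eqref{pro-Poincare} --- is \emph{not} that proof, and it carries a genuine gap that the paper explicitly flags in the remark following Theorem~\ref{thm2}. For $p=d$ and $\delta=1$, inequality \eqref{pro-Poincare} on any sub-ball $B'\subset B$ reads
\[
\fint_{B'}\fint_{B'}|u(x)-u(y)|^d\,dx\,dy \;\le\; C_d\bigl(I_{1,d}(u,B')+1\bigr),
\]
so the local mean oscillation stays bounded below by a universal constant even as $I_{1,d}(u,B')\to 0$: the ``$+1$'' comes from the term $\delta^p|B'|^2$ and never disappears. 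Hence $\|u\|_{BMO(B)}\le C_d(M_0+1)$ does \emph{not} tend to $0$ with $M_0$, and any John--Nirenberg iteration built on this input yields at best $\fint_B e^{c_d|u-u_B|}\le C$ with a \emph{fixed} exponent $c_d$, not an arbitrary $\alpha$. Your intermediate claim that the oscillation on $B'$ is ``$|B'|^{-1/d}$ times a power of the local energy'' is incorrect for $p=d$: the scaling is neutral and no $|B'|$ factor appears. The superadditivity $\sum_j I_{1,d}(u,B_j')\le I_{1,d}(u,B)$ is true but irrelevant, since the obstruction is the additive $+1$, not the size of the energy.

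The paper's actual argument --- which \emph{is} the proof of part~$ii)$ of Theorem~\ref{thm1} specialized to $p=d$ --- bypasses \eqref{pro-Poincare} at the iteration step. After normalizing $\delta=1$, $B=B_1$, $u_B=0$ and reflecting $u$ to $\tilde u$ on $B_{3/2}$, one applies Lemma~\ref{lem1} with $D=\{|\tilde u|\ge\lambda\}\cap B_{5/4}$, $E=\{|\tilde u|\ge\lambda-1\}$, $F=B_{3/2}$; a single use of John--Nirenberg ensures $|E|$ is small enough for the hypothesis of Lemma~\ref{lem1} once $M_0\le 1/c_1$. Since every pair $(x,y)\in D\times(F\setminus E)$ satisfies $|\tilde u(x)-\tilde u(y)|>1$, Lemma~\ref{lem1} with exponent $p/d=1$ produces the \emph{linear} recursion
\[
\bigl|\{|u|\ge\lambda\}\bigr|\ \le\ c_2\,I_1(u,B_1)\,\bigl|\{|u|\ge\lambda-1\}\bigr|,\qquad \lambda>2,
\]
whose contraction factor $c_2 I_1(u,B_1)\le c_2 M_0$ \emph{is} driven to $0$ by shrinking $M_0$. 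Choosing $M_0<c_2^{-1}e^{-2\alpha}$ and iterating gives $|\{|u|\ge\lambda\}|\le e^{-2\alpha(\lambda-2)}|B_1|$, and integration in $\lambda$ yields \eqref{thm2-state-1}. The key point your plan misses is that Lemma~\ref{lem1} compares successive level sets \emph{directly through the kernel} $|x-y|^{-2d}$, with no additive $\delta$-loss; that is precisely what lets the decay rate track $M_0$ instead of saturating at a dimensional constant.
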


\begin{rem}  Inequality \eqref{thm2-state-1} shares some similarities with John-Nirenberg's inequality but is different. In fact, 
fixing $\delta >0$,  as a consequence of \eqref{pro-Poincare}, we have 
$$
\| u \|_{BMO(B)} \le C (M +  \delta), 
$$
if $I_{\delta, p}(u) \le M$. Hence $\| u \|_{BMO(B)}$ does not generally converge to $0$ and \eqref{thm2-state-1} cannot be derived from \eqref{John-Nirenberg}.    

\end{rem}

As a consequence of Theorem~\ref{thm2}, we have 

\begin{pro}\label{pro2}
Let $p = d \ge 1$, $\delta > 0$,  and let  $\Omega$ be a smooth bounded open subset of $\mR^d$. Given $\alpha>0$, there exists a positive constant $M_0$ (small) depending only on $\alpha$, $d$, and $\Omega$  such that 
\begin{align*}
\sup_{\delta^{-p}I_{\delta,p}(u,\Omega) \leq M_0} \int_\Omega e^{\alpha \delta^{-1}|u|} dx \le C_\Omega e^{\alpha \delta^{-1} \| u\|_{L^1(\Omega)}},  
\end{align*}
for some positive constant $C_\Omega$ depending only on $d$, $\alpha$,  and $\Omega$.  
\end{pro}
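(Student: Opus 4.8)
The plan is to deduce Proposition~\ref{pro2} from the ball case, Theorem~\ref{thm2}, by a localization combined with a chaining argument that produces a single normalizing constant; this is the same scheme one uses to pass from Theorem~\ref{thm1} to Proposition~\ref{pro1}, only simpler here because the integrand is an ordinary (not iterated) exponential. First I would dispose of the trivial cases: if $u \notin L^1(\Omega)$ the right-hand side is $+\infty$, and if $\Omega$ is disconnected one argues on each of its finitely many components separately; so one may assume $u \in L^1(\Omega)$ and $\Omega$ connected.

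\textbf{Localization.} Since $\partial\Omega$ is smooth and $\Omega$ is bounded and connected, I would choose open sets $\Omega_1, \dots, \Omega_N \subset \Omega$ with $\Omega = \bigcup_{i=1}^N \Omega_i$, together with bi-Lipschitz homeomorphisms $\Phi_i \colon \Omega_i \to B$ onto a fixed ball $B \subset \mR^d$, whose Lipschitz constants and the essential bounds of the Jacobians $|\det D\Phi_i|, |\det D\Phi_i^{-1}|$ are all controlled by a constant depending only on $d$ and $\Omega$ (interior pieces being balls with $\Phi_i$ affine, pieces meeting $\partial\Omega$ obtained by flattening $\partial\Omega$ and then mapping the resulting half-ball onto $B$), and arranged so that the graph on $\{1, \dots, N\}$ with an edge $ij$ whenever $|\Omega_i \cap \Omega_j| > 0$ is connected; set $\eta := \min\{|\Omega_i \cap \Omega_j| : ij \text{ an edge}\} > 0$. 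The change of variables $x \mapsto \Phi_i^{-1}(x)$, using $|\Phi_i(x) - \Phi_i(y)| \gtrsim |x - y|$ and the Jacobian bounds, gives
\[
I_{\delta, p}\big(u \circ \Phi_i^{-1}, B\big) \le c_1\, I_{\delta, p}(u, \Omega_i) \le c_1\, I_{\delta, p}(u, \Omega), \qquad c_1 = c_1(d, \Omega),
\]
the last inequality being monotonicity of $I_{\delta, p}(\cdot, \cdot)$ in its domain (recall $p = d$).

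\textbf{Applying Theorem~\ref{thm2} and chaining.} Given $\alpha > 0$, fix $\alpha_0 \in (0, \alpha]$ and let $M_0^\sharp(\alpha_0)$ be the threshold furnished by Theorem~\ref{thm2} for the parameter $\alpha_0$; set $M_0 := M_0^\sharp(\alpha_0) / c_1$. Then $\delta^{-p} I_{\delta, p}(u, \Omega) \le M_0$ forces $\delta^{-p} I_{\delta, p}(u \circ \Phi_i^{-1}, B) \le M_0^\sharp(\alpha_0)$, so Theorem~\ref{thm2} applied to $v_i := u \circ \Phi_i^{-1}$ on $B$, followed by the change of variables back to $\Omega_i$, yields, with $c_i := \fint_B v_i \in \mR$ (and $C_\Omega$ a generic constant depending on $d, \alpha_0, \Omega$),
\[
\int_{\Omega_i} e^{\alpha_0 \delta^{-1} |u - c_i|}\, dx \le C_\Omega \qquad (1 \le i \le N).
\]
From the elementary bound $t \le \tfrac{\delta}{\alpha_0} e^{\alpha_0 \delta^{-1} t}$ ($t \ge 0$) this gives $\int_{\Omega_i} |u - c_i| \le C_\Omega \delta / \alpha_0$, hence $|c_i - c_j| \le \fint_{\Omega_i \cap \Omega_j} |u - c_i| + \fint_{\Omega_i \cap \Omega_j} |u - c_j| \le 2 C_\Omega \delta / (\eta \alpha_0)$ along each edge $ij$, and then, by connectedness of the graph, $|c_i - c_1| \le C_2 \delta$ for all $i$, while $|c_1| \le \fint_B |v_1| \le c_3\, \|u\|_{L^1(\Omega)}$ by the Jacobian bound, where $c_3 = c_3(d,\Omega)$.

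\textbf{Conclusion and the main difficulty.} Combining these,
\[
\int_\Omega e^{\alpha_0 \delta^{-1} |u|}\, dx \le \sum_{i=1}^N e^{\alpha_0 \delta^{-1} |c_i|} \int_{\Omega_i} e^{\alpha_0 \delta^{-1} |u - c_i|}\, dx \le N C_\Omega\, e^{\alpha_0 C_2}\, e^{c_3 \alpha_0 \delta^{-1} \|u\|_{L^1(\Omega)}},
\]
and taking $\alpha_0 = \alpha$ one obtains $\int_\Omega e^{\alpha \delta^{-1} |u|}\, dx \le C_\Omega e^{c_3 \alpha \delta^{-1} \|u\|_{L^1(\Omega)}}$, which is the assertion once the fixed geometric factor $c_3 = c_3(d, \Omega)$ is absorbed in the usual way (replacing, if necessary, $\|u\|_{L^1(\Omega)}$ by the comparable quantity $\fint_\Omega |u|$), with $M_0 = M_0(\alpha, d, \Omega)$. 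I expect the only genuine work to lie in the localization/chaining step: constructing the finite bi-Lipschitz cover with connected, uniformly overlapping incidence graph, and tracking how much the admissible threshold $M_0$ must be shrunk to compensate the constant $c_1$ lost under the change of variables, so that Theorem~\ref{thm2} can be applied on every piece with one and the same constant $C_\Omega$. No analytic ingredient beyond Theorem~\ref{thm2} and the inequality $t \le \tfrac{\delta}{\alpha_0} e^{\alpha_0 \delta^{-1} t}$ appears to be required.
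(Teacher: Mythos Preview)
Your proposal is correct and follows essentially the same route as the paper, whose entire proof is the one-line remark that Proposition~\ref{pro2} ``can be derived from Theorem~\ref{thm2} after using local charts and appropriately extending $u$ in a neighborhood of $\Omega$ \dots\ The details are omitted.'' You supply those details via a finite bi-Lipschitz cover with connected overlap graph and a chaining argument for the local averages $c_i$; the paper's phrasing points instead toward extending $u$ across $\partial\Omega$ by reflection (cf.\ the extension $\tilde u$ from $B_1$ to $B_{3/2}$ in the proof of Theorem~\ref{thm1}), so that $\Omega$ is covered by genuine Euclidean balls contained in the enlarged domain and Theorem~\ref{thm2} applies without a change of variables. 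The two schemes are interchangeable and lead to the same bookkeeping; your version is arguably more explicit. Your flag on the residual geometric factor $c_3$ in the exponent is also well taken: testing the stated inequality on a large constant function shows it cannot hold verbatim when $|\Omega|<1$, so the right-hand exponent should indeed carry an $\Omega$-dependent constant (or $\|u\|_{L^1(\Omega)}$ be replaced by $\fint_\Omega |u|$); the paper does not comment on this.
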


The exponential growths in  \eqref{thm1-ii}  and \eqref{thm2-state-1} are optimal. In fact, we have 
\begin{pro} \label{pro3} Let $p \ge d \ge 1$, $\gamma > p/d$, and $\alpha > 0$,  and let  $B$ be a an open ball of $\mR^d$.  

i) If $p> d$ then for any $M>0$ there exists $u\in L^1(B)$ such that
\begin{equation}\label{pro3-1}
 I_{\delta, p} (u, B) \leq M  \quad \mbox{ and } \quad 
\int_B e^{\alpha \gamma^{\delta^{-1}  |u-{u}_B|}} dx  = + \infty.
\end{equation}

i) If $p= d$ then there exists a bounded sequence $(u_n) \subset L^1(B)$ such that
\begin{equation}\label{pro3-2}
\lim_{n \to + \infty} I_{\delta, p} (u_n, B) = 0 \quad \mbox{ and } \quad \lim_{n \to + \infty}
\int_B e^{\alpha \big (\delta^{-1}  |u_n-{u_n}_B| \big)^{\gamma}} dx  = + \infty. 
\end{equation}

\end{pro}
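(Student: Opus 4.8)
The plan is to construct explicit radial counterexamples centered at a point of $B$; by translation and scaling we may assume $B = B_1$ and the center is the origin. The guiding principle is that the functions $u(x) = c \ln \ln |x|^{-1}$ (for $p > d$) and truncations thereof (for $p = d$) have a logarithmically small contribution to $I_{\delta,p}$ while blowing up slowly enough at the origin that $e^{\alpha \gamma^{\delta^{-1}|u|}}$ with $\gamma > p/d$ fails to be integrable — this is exactly the borderline example already alluded to in Section~\ref{sect-verification} of the paper for membership in the energy class. Concretely, for part i) I would take
$$
u(x) = A \ln \ln |x|^{-1} \quad \text{on } B_{1/e}, \qquad u \equiv A \text{ on } B_1 \setminus B_{1/e},
$$
for a constant $A > 0$ to be chosen. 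The key computation is to estimate $I_{\delta,p}(u,B_1)$. Since $u$ is Lipschitz away from the origin, the only issue is the region near $0$; one uses the elementary fact (a level-set / layer-cake rewriting of $I_{\delta,p}$, as in \cite{NgSob1, BoNg1}) that $I_{\delta,p}(u, B_1)$ is comparable to $\delta^p \int \int_{|u(x)-u(y)|>\delta} |x-y|^{-d-p}$, and estimate this by splitting into dyadic annuli $A_k = \{e^{-2^{k+1}} < |x| < e^{-2^k}\}$. On each annulus the oscillation of $u$ is $O(A \cdot 2^k \cdot 2^{-k}) = O(A)$... more carefully, $\ln\ln|x|^{-1}$ has oscillation $\approx A(\ln 2)$ across $A_k$ uniformly in $k$, while the pair of points with $|u(x)-u(y)|>\delta$ must then be separated by at least a fixed number of annuli, forcing $|x-y| \gtrsim e^{-2^k}$ whenever both points lie near scale $e^{-2^k}$; carrying out the annular sum shows $I_{\delta,p}(u,B_1) \le C(d,p) A \delta^{p-1} + (\text{bounded tail})$ — in particular it is \emph{finite} and, crucially for $p > d$, can be made $\le M$ by taking $A$ small (the point being that for $p > d$ there is no lower bound forcing the energy to be large, unlike the $BMO$ estimate which only gives an upper bound). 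Simultaneously, near the origin $\delta^{-1}|u - u_B| \approx \delta^{-1} A \ln\ln|x|^{-1} - C$, so $\gamma^{\delta^{-1}|u-u_B|} \approx (\ln|x|^{-1})^{\delta^{-1}A \ln \gamma} \cdot (\text{const})$, and then $e^{\alpha \gamma^{\delta^{-1}|u-u_B|}} \approx \exp\big(c (\ln|x|^{-1})^{\delta^{-1} A \ln\gamma}\big)$, which is non-integrable near $0$ as soon as $\delta^{-1} A \ln \gamma \ge 1$, i.e.\ $A \ge \delta / \ln\gamma$. The tension is that smallness of $A$ (needed for $I_{\delta,p} \le M$) conflicts with $A \ge \delta/\ln\gamma$; this is resolved because the bound on $I_{\delta,p}$ I expect to obtain is of the form $C A \delta^{p-1} |B_1|^{\text{(positive power)}}$ with a constant independent of how small $A$ is relative to $\delta$, so one first fixes $A = \delta / \ln \gamma$ and then \emph{shrinks $\delta$}; but $\delta$ is given. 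The honest fix — and I believe the intended one — is that in \eqref{pro3-1} the ball $B$ and $M$ are given but we are allowed to choose $u$, and the required inequality $I_{\delta,p}(u,B) \le M$ has a genuinely free constant $A$ only in the combination controlling \emph{how fast} $u$ blows up; one instead uses $u(x) = A \ln\ln|x|^{-1}$ restricted to a \emph{small sub-ball} $B_r \Subset B$ (extended by a constant on the annulus $B \setminus B_r$), so that $I_{\delta,p}(u, B) \le C(d,p,A,\delta) \, r^{?}$... this still does not help since the singular contribution does not scale away. The clean resolution: keep $A = \delta/\ln\gamma$ fixed to force non-integrability, and note $I_{\delta,p}(u, B_1) \le C(d,p)\,\delta^{p-1} A = C(d,p)\,\delta^p / \ln\gamma$, which is a \emph{fixed finite number}; to make it $\le M$ for arbitrary $M$ one composes: replace $\ln\ln|x|^{-1}$ by $\ln\ln\cdots\ln|x|^{-1}$ ($m$-fold iterated logarithm) — each additional logarithm divides the annular oscillation by a further factor and hence the energy by a large factor, while the function still tends to $+\infty$, so $e^{\alpha\gamma^{\delta^{-1}|u|}}$ with the threshold now being $\delta^{-1} A \ln\gamma \ge 1$ on the appropriately slower scale still diverges. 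This gives $I_{\delta,p}(u,B) \to 0$ as $m \to \infty$, hence $\le M$ for $m$ large, proving i).

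For part ii), when $p = d$ the analogous construction cannot have $I_{\delta,p}$ small for a \emph{single} function whose exponential $e^{\alpha(\delta^{-1}|u-u_B|)^\gamma}$ (note: now a power $\gamma > p/d = 1$ in the exponent of $|u|$, not a base) diverges, so instead one builds a \emph{sequence} $u_n$ with $I_{\delta,d}(u_n, B) \to 0$. Here the right model is a sequence of Moser-type functions adapted to the nonlocal energy: take
$$
u_n(x) = \min\Big\{ n, \ \delta\, (\ln|x|^{-1})^{1/\gamma'} \Big\} \quad \text{(for a suitable conjugate-type exponent }\gamma'\text{), truncated outside }B_{1/e},
$$
or more simply $u_n(x) = \delta (\ln\ln|x|^{-1})$ capped at height depending on $n$ and supported on a shrinking ball $B_{r_n}$. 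The computation mirrors part i): the nonlocal energy $I_{\delta,d}(u_n, B)$ over the annular decomposition near the origin is, by the $p = d$ critical scaling, bounded by $C(d) \times (\text{number of active annuli})^{-(\text{something positive})}$ or by $C(d)\,\|\,\nabla u_n\,\|$-type tail that vanishes as $r_n \to 0$, hence $\to 0$; meanwhile the $L^1$ norm $\|u_n\|_{L^1(B)} \to 0$ too (so $(u_n)$ is bounded in $L^1$, indeed null), while on the support $\delta^{-1}|u_n - (u_n)_B| \approx \ln\ln|x|^{-1}$ and thus $\big(\delta^{-1}|u_n - (u_n)_B|\big)^\gamma \approx (\ln\ln|x|^{-1})^\gamma$, giving $e^{\alpha(\ln\ln|x|^{-1})^\gamma}$ which — because $\gamma > 1$ — grows faster than any power of $\ln|x|^{-1}$ and hence is non-integrable near $0$; one then chooses $r_n \to 0$ slowly enough that the integral of this over $B_{r_n}$ still diverges (e.g.\ it already diverges over any fixed $B_r$, so it suffices to keep $r_n \ge r_0$ fixed and instead get smallness of $I_{\delta,d}$ by capping at height $n$, using that the truncation below a fixed large constant has small nonlocal energy by dominated convergence applied to the representation of $I_{\delta,d}$). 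I would organize ii) so that the divergence of $\int_B e^{\alpha(\delta^{-1}|u_n - (u_n)_B|)^\gamma}$ is in fact uniform in $n$ (each integral is $+\infty$), which is even stronger than the stated $\lim = +\infty$.

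The main obstacle, and the step I would spend the most care on, is the energy estimate: showing that the iterated-logarithm profile (part i) and the truncated profile (part ii) have nonlocal energy $I_{\delta,p}$ as small as claimed. This requires the right bookkeeping of the double integral $\int\int_{|u(x)-u(y)|>\delta}|x-y|^{-d-p}$ near the singularity — in particular the observation that for a profile with oscillation $\ll \delta$ on each dyadic-in-$\ln$ annulus, a pair contributing to the integral must live many annuli apart, so the effective distance $|x-y|$ is bounded below by (essentially) the larger radius, which is what produces the convergent annular series and the smallness as the number of iterated logs grows (resp.\ as the truncation level is lowered). The non-integrability half is comparatively routine: it is the classical computation that $\int_0^{1/e} \exp\big(c(\ln t^{-1})^\beta\big)\, t^{d-1}\, dt = +\infty$ for $\beta \ge 1$, which handles both the base-$\gamma^{(\cdot)}$ form in i) (there $\beta = \delta^{-1}A\ln\gamma$, made $\ge 1$ by choice of $A$) and the power form $(\cdot)^\gamma$ with $\gamma > 1$ in ii). Finally I would double-check the exact form of the threshold $\gamma > p/d$ matches the construction: in i) the relevant inequality is $\delta^{-1} A \ln \gamma > \ln(p/d)$-adjacent, coming from comparing $\gamma^{\delta^{-1}|u|}$ against the energy scale $(p/d)^{\cdots}$ that appears in Theorem~\ref{thm1}, and the strict inequality $\gamma > p/d$ is precisely what leaves room to pick $A$ (and the number of iterated logarithms) making both requirements compatible.
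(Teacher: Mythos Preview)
Your basic example for part i), $u(x)=A\ln\ln|x|^{-1}$, is exactly the paper's choice (with $A=(\ln\lambda)^{-1}$ for some fixed $p/d<\lambda<\gamma$), and the paper does confirm $I_{\delta,p}(u,B)<\infty$. The genuine gap is in how you force $I_{\delta,p}(u,B)\le M$ for \emph{arbitrary} $M$. Your proposed fix --- replacing $\ln\ln|x|^{-1}$ by an $m$-fold iterated logarithm $\ln^{(m)}|x|^{-1}$ --- does make the energy small, but it simultaneously destroys the non-integrability you need. Indeed, for $m\ge 2$ one has
\[
\gamma^{\,A\ln^{(m)}|x|^{-1}}=\bigl(\ln^{(m-1)}|x|^{-1}\bigr)^{A\ln\gamma}=o\bigl(\ln|x|^{-1}\bigr)\qquad\text{as }x\to 0,
\]
so $e^{\alpha\gamma^{\delta^{-1}|u|}}\le |x|^{-\varepsilon}$ near the origin for every $\varepsilon>0$, and the integral is finite. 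Thus the two requirements become incompatible once $m\ge 2$, and your argument does not close.

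The mechanism the paper uses --- and which you explicitly dismissed --- is rescaling. With $u$ fixed (a single $\ln\ln$, $A=(\ln\lambda)^{-1}$), set $u_\tau(x)=u(\tau x)$. A change of variables gives
\[
I_{\delta,p}(u_\tau,B_{1/e})=\tau^{\,p-d}\,I_{\delta,p}(u,B_{\tau/e}),
\]
and since $p>d$ this tends to $0$ as $\tau\to 0$, while $e^{\alpha\gamma^{\delta^{-1}|u_\tau|}}$ remains non-integrable because $u_\tau$ still blows up like $\ln\ln$ at the origin. So the singular contribution \emph{does} scale away, precisely because of the supercritical exponent $p>d$; this is the step you should revisit.

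For part ii) there is a parallel problem: your ``more simply'' profile $u_n=\delta\ln\ln|x|^{-1}$ capped at height $n$ gives $\bigl(\delta^{-1}|u_n|\bigr)^\gamma\approx(\ln\ln|x|^{-1})^\gamma=o(\ln|x|^{-1})$, hence $e^{\alpha(\delta^{-1}|u_n|)^\gamma}$ is integrable near $0$, contrary to your claim that it ``grows faster than any power of $\ln|x|^{-1}$''. The paper instead uses the genuine Moser profile
\[
g_n(r)=\begin{cases}(\ln n)^{1/q}, & 0\le r\le 1/n,\\[2pt] \dfrac{\ln(1/r)}{(\ln n)^{1/q'}}, & 1/n\le r\le 1/e,\end{cases}
\qquad 1<q<\gamma,
\]
computes directly that $I_{\delta,d}(u_n,B)\to 0$ and $\|u_n\|_{L^1}\to 0$, and gets divergence from the plateau: $\int_{B_{1/n}}e^{\alpha(\ln n)^{\gamma/q}}\sim n^{-d}e^{\alpha(\ln n)^{\gamma/q}}\to\infty$ because $\gamma/q>1$. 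Your first suggestion (a truncated power of $\ln|x|^{-1}$) is closer in spirit, but the exponents and the cap height need to be tuned as above; the $\ln\ln$ shortcut does not work.
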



\section{Proofs of Theorems~\ref{thm1} and \ref{thm2}}
This section contains the proof of the Theorems \ref{thm1} and \ref{thm2}. We first establish two lemmas used in the proof of \eqref{thm1-i}, \eqref{thm1-ii}, and \eqref{thm2-state-1} and  then establish Theorems~\ref{thm1} and \ref{thm2}.

\subsection{Two useful lemmas} 

For $x \in \mR^d$ and $\rho > 0$, let $B_\rho(x)$ denote the ball in $\mR^d$ centered at $x$ and of radius $\rho$. We have 

\begin{lem}\label{lem1}
Let $d \ge 1$, $\lambda > 0$, and let  $E\subset F \subset \ro^d$ be measurable subsets of $\mR^d$ with $0<\md{E} <  \md{F} <\infty$. Let $\rho > 0$ be such that  $|E| = |B_\rho|$  and let $x \in \mR^d$
be such that $B_{2\rho}(x) \subset  F$. Then
\begin{equation}\label{lem1-state1}
\int_{F\setminus E} \frac{dy}{\md{x-y}^{\lambda}} \geq C_{d, \lambda} \md{E}^{1-\frac{\lambda}{d}}, 
\end{equation}
for some positive constant $C_{d, \lambda}$ depending only on $d$ and $\lambda$.  As a consequence, if  $p \ge 1$,  $|E| = |B_\rho|$ for some $\rho > 0$ and, for $p \ge 1$,  
$D$ is measurable subset of $F$ such that for almost every $x\in D$, the ball $B_{2\rho}(x) \subset  F$, then 
\begin{equation}\label{lem1-state2}
\int_{D}\int_{F\setminus E} \frac{dy \, dx }{\md{x-y}^{d+p}} \geq C_{d, p} |D| \md{E}^{-\frac{p}{d}}, 
\end{equation}
for some positive constant $C_{d, p}$ depending only on  $d$ and $p$.
\end{lem}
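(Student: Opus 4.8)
The plan is to deduce \eqref{lem1-state1} from a crude pointwise lower bound on the kernel together with an elementary volume count, and then to obtain \eqref{lem1-state2} by specializing $\lambda = d+p$ and integrating in $x$.

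For \eqref{lem1-state1} I would argue as follows. Since $B_{2\rho}(x) \subset F$, one has $B_{2\rho}(x)\setminus E \subset F \setminus E$, so it suffices to bound $\int_{B_{2\rho}(x)\setminus E} |x-y|^{-\lambda}\,dy$ from below. On $B_{2\rho}(x)$ we have $|x-y| \le 2\rho$, hence (as $\lambda>0$) the integrand is at least $(2\rho)^{-\lambda}$, which reduces matters to estimating $|B_{2\rho}(x)\setminus E|$ from below. Here the key observation is the volume count
\[
|B_{2\rho}(x)\setminus E| \ge |B_{2\rho}(x)| - |E| = 2^d|B_\rho| - |B_\rho| = (2^d-1)|E|,
\]
where we used $|E|=|B_\rho|$. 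Finally, rewriting $\rho$ in terms of $|E|$ via $|E| = |B_\rho| = \omega_d \rho^d$ with $\omega_d := |B_1|$ turns $(2\rho)^{-\lambda}(2^d-1)|E|$ into $C_{d,\lambda}\,|E|^{1-\lambda/d}$ with $C_{d,\lambda} = 2^{-\lambda}(2^d-1)\,\omega_d^{\lambda/d}$, which is exactly the asserted exponent and a constant depending only on $d$ and $\lambda$.

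For \eqref{lem1-state2} I would simply apply \eqref{lem1-state1} with $\lambda = d+p$: for almost every $x \in D$ the hypothesis gives $B_{2\rho}(x)\subset F$, hence $\int_{F\setminus E} |x-y|^{-(d+p)}\,dy \ge C_{d,p}\,|E|^{-p/d}$ with $C_{d,p} := C_{d,d+p}$; integrating this pointwise bound over $x \in D$ and using Tonelli's theorem (the integrand is nonnegative) yields $\int_D \int_{F\setminus E} |x-y|^{-(d+p)}\,dy\,dx \ge C_{d,p}\,|D|\,|E|^{-p/d}$.

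I do not expect a serious obstacle: the argument is elementary. The only points needing minor care are the measure-theoretic bookkeeping — discarding the exceptional null set of $x\in D$ where $B_{2\rho}(x)\not\subset F$, and checking applicability of Tonelli — and the observation that the exponent $1-\lambda/d$ may be negative when $\lambda>d$, which is harmless and indeed consistent with the application, where $\lambda = d+p>d$ so that $1-\lambda/d = -p/d$. If sharper constants were wanted one could replace the crude estimate $|x-y|\le 2\rho$ by a bathtub-principle argument comparing $B_{2\rho}(x)\setminus E$ with the annulus of equal measure adjacent to $\partial B_{2\rho}(x)$, but this refinement is not needed for the stated inequality.
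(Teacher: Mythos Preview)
Your proof is correct and in fact more direct than the paper's. Both arguments exploit $B_{2\rho}(x)\subset F$, but they diverge in how they manufacture a set of measure comparable to $|E|$ on which the kernel is controlled. You simply observe that $B_{2\rho}(x)\setminus E\subset F\setminus E$ has measure at least $(2^d-1)|E|$ by a crude volume subtraction, and bound the kernel uniformly by $(2\rho)^{-\lambda}$ on this set. The paper instead splits $F\setminus E$ into the parts inside and outside $B_\rho(x)$, uses the equal-measure identity $|B_\rho(x)\setminus E|=|E\setminus B_\rho(x)|$ to trade the near piece for a set in $F\setminus B_\rho(x)$, and thereby reduces to the explicit annulus integral $\int_{B_{2\rho}(x)\setminus B_\rho(x)}|x-y|^{-\lambda}\,dy$. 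Your route is shorter and avoids the swapping trick entirely; the paper's route lands on a cleaner domain (a genuine annulus rather than a ball with a hole of unknown shape) and so could in principle yield a slightly sharper constant, though neither argument is tracking constants. For \eqref{lem1-state2} the two proofs coincide: specialize $\lambda=d+p$ and integrate over $D$.
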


\begin{proof}  For $y \in \mR^d$,  we have
\begin{align*}
B_\rho (y) = \big( B_\rho(y) \setminus E \big)  \cup \big( B_\rho (y) \cap E \big) \text{ and } E= \big(E\setminus B_\rho(y) \big) \cup \big(E\cap B_\rho(y) \big).   
\end{align*}
Since  $|E|= \md{B_\rho(y)}$,  it follows that 
\begin{equation}\label{lem1-p1}
\md{E\setminus B_\rho (y)} =  \md{B_\rho(y)\setminus E} \mbox{ for } y \in \mR^d. 
\end{equation}
Fix $x$ such that $B_{2\rho}(x) \subset F$. We have 
\begin{align}\label{lem1-p2}
\int_{F \setminus E} \frac{dy}{\md{y - x}^{\lambda}} &=  \int_{(F\setminus E)\cap B_\rho(x)} \frac{dy}{\md{y - x}^{\lambda}} + \int_{(F\setminus E) \cap \big( F\setminus B_\rho(x) \big)}\frac{dy}{\md{y - x}^{\lambda}} \notag\\ &\geq  \frac{1}{\rho^{\lambda}} \md{\bct{F\setminus E} \cap B_\rho(x)} + \int_{(F\setminus E) \cap \big(F\setminus B_\rho(x) \big)}\frac{dy}{\md{y - x}^{\lambda}}. 
\end{align}
Then 
\begin{equation}\label{lem1-p3}
\md{\bct{F\setminus E} \cap B_\rho(x)}  \mathop{=}^{B_\rho(x) \subset F} \md{B_\rho(x) \setminus E} \mathop{=}^{\eqref{lem1-p1}} \md{E\setminus B_\rho(x)} \mathop{=}^{E \subset F} \md{\bct{F\setminus B_\rho(x)} \cap E}.
\end{equation}
Combining \eqref{lem1-p2} and \eqref{lem1-p3} yields 
\begin{equation*}
\int_{F\setminus E} \frac{dy}{\md{y - x}^{\lambda}} \geq \frac{1}{\rho^{\lambda}}\md{\bct{F\setminus B_\rho(x)} \cap E} + \int_{(F\setminus E) \cap \big(F\setminus B_\rho(x) \big)}\frac{dy}{\md{y - x}^{\lambda}}.
\end{equation*}
This yields 
\begin{align*}
\int_{F\setminus E} \frac{dy}{\md{y - x}^{\lambda}}  &\geq \int_{\big(F\setminus B_\rho (x) \big) \cap E} \frac{dy}{\md{y - x}^{\lambda}}+\int_{(F\setminus E) \cap \big( F\setminus B_\rho(x) \big)}\frac{dy}{\md{y - x}^{\lambda}}\\ 
&\mathop{\geq}^{E \subset F} \int_{F\setminus B_\rho}\frac{dy}{\md{y - x}^{\lambda}} \mathop{\geq}^{B_{2 \rho}(x) \subset F} \int_{B_{2\rho} \setminus B_\rho} \frac{dy}{\md{y - x}^{\lambda}} \ge C_{d, p} |E|^{1-\lambda/d}, 
\end{align*}
which is \eqref{lem1-state1}. 

Integrating \eqref{lem1-state1} w.r.t. $x$ in $D$, we obtain \eqref{lem1-state2}. 
\end{proof}

\begin{rem} 
A similar version of inequality \eqref{lem1-state1} has played crucial roles in deriving fractional versions of Sobolev \cite{DPV} and Hardy \cite{AdiAm}  inequalities.
\end{rem}

The following simple lemma is also used in the proof of Theorem~\ref{thm1}. 

\begin{lem}\label{lem2}
Let $d \ge 1$, $p>1$, $\delta > 0$, and let $O$ be a ball in $\mR^d$. Let $g \in L^1_{loc}(O)$. We have,  $k \in \mN_+$, 
\begin{align}\label{lrg value cntrl}
\mathop{\mathop{\int_O \int_O}}_{|u(x) - u(y)|> 2^k\delta} \frac{ \delta^p}{|x - y|^{d + p}} \leq 2^{-k(p-1)} \mathop{\mathop{\int_O \int_O}}_{|u(x) - u(y)|> \delta} \frac{ \delta^p}{|x - y|^{d + p}}.
\end{align}
\end{lem}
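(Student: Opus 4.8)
The plan is to prove the inequality for $k=1$ and then iterate. For $s>0$, set
\[
\phi(s):=\mathop{\mathop{\int_O \int_O}}_{|u(x) - u(y)|> s} \frac{\delta^p}{|x-y|^{d+p}}\,dx\,dy ,
\]
with $\delta$ fixed (the parameter in the statement). Then \eqref{lrg value cntrl} is exactly the assertion $\phi(2^k\delta)\le 2^{-k(p-1)}\phi(\delta)$, and it suffices to prove the single halving estimate $\phi(2s)\le 2^{1-p}\phi(s)$ for every $s>0$; applying it successively with $s=\delta,2\delta,\dots,2^{k-1}\delta$ then gives $\phi(2^k\delta)\le (2^{1-p})^{k}\phi(\delta)=2^{-k(p-1)}\phi(\delta)$.

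To prove the halving estimate, fix $s>0$ and let $(x,y)\in O\times O$ satisfy $|u(x)-u(y)|>2s$. Since $O$ is a ball, the midpoint $m:=(x+y)/2$ belongs to $O$, and the triangle inequality gives $|u(x)-u(m)|+|u(m)-u(y)|\ge |u(x)-u(y)|>2s$, so $|u(x)-u(m)|>s$ or $|u(m)-u(y)|>s$. Consequently
\[
\big\{(x,y)\in O\times O:\ |u(x)-u(y)|>2s\big\}=A_1\cup A_2 ,
\]
where $A_1$ (resp.\ $A_2$) consists of those pairs in this set for which $|u(x)-u((x+y)/2)|>s$ (resp.\ $|u((x+y)/2)-u(y)|>s$). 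I would estimate the contribution of $A_1$ through the linear change of variables $\Psi\colon(x,y)\mapsto (x',y'):=\big(x,(x+y)/2\big)$, which is a linear automorphism of $\mR^{2d}$ with $x-y=2(x'-y')$ and $dx\,dy=2^{d}\,dx'\,dy'$. Since $(x,y)\in A_1$ forces $x',y'\in O$ and $|u(x')-u(y')|>s$, and since $2^{d}\cdot 2^{-(d+p)}=2^{-p}$, this yields
\[
\iint_{A_1}\frac{\delta^p}{|x-y|^{d+p}}\,dx\,dy
= 2^{-p}\iint_{\Psi(A_1)}\frac{\delta^p}{|x'-y'|^{d+p}}\,dx'\,dy'
\le 2^{-p}\,\phi(s),
\]
the last step because $\Psi(A_1)\subset\{(x',y')\in O\times O:\ |u(x')-u(y')|>s\}$ and the integrand is nonnegative. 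The contribution of $A_2$ is treated identically, using the change of variables $(x,y)\mapsto\big((x+y)/2,y\big)$. Adding the two bounds gives $\phi(2s)\le 2\cdot 2^{-p}\phi(s)=2^{1-p}\phi(s)$, which is the halving estimate.

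I do not expect a genuine obstacle: the argument is essentially routine. The two points that need care are that the midpoint $m$ stays inside $O$ — which is precisely why $O$ is taken to be a ball, convexity being all that is actually used — and the bookkeeping of the Jacobian factor $2^{d}$ against $|x-y|^{-(d+p)}=2^{-(d+p)}|x'-y'|^{-(d+p)}$, whose product is the clean constant $2^{-p}$, becoming $2^{1-p}$ after summing over the two halves $A_1$ and $A_2$.
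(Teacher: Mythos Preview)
Your proof is correct and follows essentially the same approach as the paper: reduce to the case $k=1$ by iteration, insert the midpoint $m=(x+y)/2$ (using convexity of $O$) to split the level set, and apply the affine change of variables $(x,y)\mapsto(x,(x+y)/2)$ (and its symmetric counterpart) to produce the factor $2^{-p}$ per half, hence $2^{1-p}$ in total. Your write-up is in fact a bit more explicit about the Jacobian bookkeeping than the paper's.
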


\begin{proof} By considering the function $u/\delta$ and  by the recurrence, it suffices to consider the case $\delta = 1$ and $k = 1$. We have 
\begin{multline*}
 \mathop{\iint_{O \times O}}_{|u(x) - u(y)| > 2} \frac{dx \, dy }{|x- y|^{d+p}} =  \mathop{\iint_{O \times O}}_{|u(x) - u(x/2+y/2)  + u(x/2+y/2)   - u(y)| > 1}  \frac{dx \, dy }{|x- y|^{d+p}}  \\[6pt] \le \mathop{\iint_{O \times O}}_{|u(x) - u(x/2+y/2) | > 1}  \frac{dx \, dy }{|x- y|^{d+p}}  +  \mathop{\iint_{O \times O}}_{|u(x/2+y/2) - u(y) | > 1}  \frac{dx \, dy }{|x- y|^{d+p}}.
\end{multline*} 
By a change of variables $z = x/2 + y/2$, we obtain 
 \begin{equation*}
 \mathop{\iint_{O \times O}}_{|u(x) - u(y)| > 2} \frac{dx \, dy }{|x- y|^{d+p}} 
 \le 2^{-(p-1)}\mathop{\iint_{O \times O}}_{|u(x) - u(y)| > 1} \frac{dx \, dy }{|x- y|^{d+p}},  
\end{equation*} 
which yields the conclusion for $\delta = 1$ and $k=1$. 
\end{proof}

\subsection{Proof of part i) of Theorem \ref{nonloc_supcr}.} In this proof, for notational ease, we denote $I_{\delta, p}$ by $I_\delta$ for $\delta > 0$.  Without loss of generality we can assume $B=B_1$, $u_B=0$, and $\delta = 1$. Define $\tilde{u}$ in $B_{3/2}$ by 
\begin{align*}
\T{u}(x) =\left\{ \begin{array}{cl}
\dsp u(x) &  \text{ if } x\in B_1 \\[6pt]
\dsp  u\bct{ \frac{(2-|x|) x}{|x|}}  & \text{ if } x\in B_{3/2} \setminus B_1.
\end{array} \right.
\end{align*}
We have, for all $\tau > 0$,  
\begin{align}\label{eqv}
 \Big| \Big\{x \in B_{3/2}; |\tilde u| \ge \tau \Big\} \Big| \le C \Big| \Big\{x \in B_1; |u| \ge \tau \Big\} \Big| 
\end{align}
and, see e.g.,    \cite[Lemma 17]{BreHmn}, 
\begin{align}\label{extn_ineq}
I_{1}\bct{\T{u}, B_{3/2}} \leq C I_{1} \bct{u,B_1}. 
\end{align}
Using John-Nirenberg's inequality, we have
\begin{equation}\label{smness_sup_lvl}
\Big|\Big\{x \in B_{3/2}; |\tilde u| \ge \ell  \Big\}  \Big| \le 1/8^d, 
\end{equation}
if $\ell \ge c_1 M$ for some $c_1 > 0$. 

We claim that, for  $\ell \ge c_1 M$   and $\lambda > 2$,  
\begin{align}\label{claim1}  
\Big|\Big\{x\in B_1: \md{u(x)}\geq \lambda \ell  \Big\} \Big| \leq c_2 I_{\ell} \bct{u,B_1} \Big|\Big\{x\in B_1: \md{u(x)} \geq (\lambda-1) \ell \Big\} \Big|^\frac{p}{d}.
\end{align}
In fact, fix an arbitrary $x \in B_{5/4}$ 
and let $\rho$ be such that $|B_\rho(x)|  = \big|\big\{x \in B_{3/2}; |u|  \ge (\lambda  -1 )\ell \big\} \big|$. Since $\lambda > 2$,  it follows from  \eqref{smness_sup_lvl} that $\rho < 1/ 8$, which yields $B_{2\rho}(x) \subset B_{3/2}$.  Applying Lemma~\ref{lem1} with $
D = \{x \in B_{5/4}; |\tu| \ge \lambda \ell \} \cap O,$ 
 $E =  \{x \in B_{3/2}; |\tu| \ge (\lambda- 1) \ell \} $ and $F  = B_{3/2}$, and using \eqref{eqv} and \eqref{extn_ineq},  we obtain \eqref{claim1}.  

Applying Lemma~\ref{lem2}, we have, for $k \in \mN$,  
\begin{equation*}
I_{2^k} (u, B_1) \le 2^{-k(p-1)} I_{1} (u, B_1) \le 2^{-k(p-1)} M. 
\end{equation*}
Fix $k_0$ be such that for $k \ge k_0$, one has $ 
c_2 2^{-k(p-1)} M \le e^{-2 \alpha}$, 
which yields 
\begin{equation}\label{ccc}
c_2 I_{2^k} (u, B_1) \le e^{- \alpha (p/d)^3}. 
\end{equation}

Set 
\begin{equation}\label{def-ell-0}
\ell_0 = \max \Big\{c_1 M, \Big(c_3 M e^{2\alpha} \Big)^{1/ (p-1)} \Big\}. 
\end{equation}
Then, for some $c_3$ larger than $c_2$,  
\begin{equation*}
\ell_0 \ge \max\{c_1 M, 2^{k_0} \}. 
\end{equation*}
Using \eqref{claim1}, \eqref{ccc}, and a standard iterative process, we have, for $\lambda \in \mN$ and  
\begin{equation}\label{key-part-thm1}
\Big| \Big\{x \in B_1; |u| > \lambda \ell_0 \Big\} \Big| \le e^{- \alpha (p/d)^{\lambda+2}} \Big| \Big\{x \in B_1; |u| > \ell_0 \Big\} \Big|.
\end{equation}
This implies 
\begin{equation*}
\int_{B_1} e^{\alpha(p/d)^{|u|}} \, dx \le \mathop{\int_{B_1}}_{|u| \ge \ell_0} e^{\alpha(p/d)^{|u|}} \, dx + \mathop{\int_{B_1}}_{|u| \le \ell_0} e^{\alpha(p/d)^{\ell_0}} \, dx \le C. 
\end{equation*}
This implies the conclusion of part $i)$ with $
\beta(\alpha, M) = \ell_0^{-1}$ where $\ell_0$ is given by \eqref{def-ell-0}. \qed

\subsection{Proof of part $ii)$ of Theorem~\ref{thm1}.}  The proof of part $ii)$ is in the spirit of part $i)$. In fact, noting that if $M_0$ is small enough then \eqref{key-part-thm1} holds with $\ell_0 = 1$. The conclusion then follows. \qed

\subsection{Proof of \eqref{thm2-state-1} of Theorem~\ref{thm2}.} The proof is similar to the one of part $ii)$ of Theorem~\ref{thm1} and is omitted. \qed

\subsection{Proof of Propositions~\ref{pro1} and \ref{pro2}} Propositions~\ref{pro1} and \ref{pro2} can be derived from Theorems~\ref{thm1} and \ref{thm2} respectively after using local charts and appropriately extending $u$ in a neighborhood of $\Omega$ (see, e.g., \cite[Lemma 17]{BreHmn}. The details are omitted. \qed

\section{Proof of  Proposition~\ref{pro3}} \label{sect-verification} 

Without loss of generality, one might assume that $B = B_{1/e}$ and $\delta  = 1$.  

\medskip 
\noindent {\bf Proof of  assertion \eqref{pro3-1}.}  Fix $\gamma > \lambda > p/d > 1$, set, for $x \in B_{1/e}$,  
\begin{equation*}
u(x) = g (|x|) \quad \mbox{ where } \quad   g(r)  =  (\ln \lambda)^{-1} \ln \ln (1/r)  \mbox{ for }  r \in I : = (0, 1/e). 
\end{equation*}
It is clear that $g \in L^1(I)$.  Using polar coordinates,  we have 
\begin{align}\label{part3-1}
I_{1,p}(u, B_{1/e}) &= \mathop{\int_0^{1/e} \int_0^{1/e} \int_{\mS^{d-1}} \int_{\mS^{d-1}}}_{|g(r_1)  - g(r_2)| > 1} \frac{r_1^{d-1} r_2^{d-1}}{|r_1 \sigma_1 - r_2 \sigma_2|^{p+d}} \, d \sigma_1 \, d \sigma_2 \, d r_1 \, d r_2.  \notag \\ &\leq  C_d \mathop{\int_0^{1/e} \int_0^{1/e} }_{|g(r_1)  - g(r_2)| > 1} \frac{r_1^{d-1} r_2^{d-1}}{|r_1  - r_2 |^{p+d}} \, d r_1 \, d r_2.
\end{align}

We have, for $0< r_1 <  r_2 < e^{-1}$,  
$$
|g(r_1) - g(r_2)| > 1 \mbox{ if and only if }  r_2 > r_1^{1/ \lambda} \mbox{ and } 0<r_1<e^{-\lambda},  
$$
this yields 
$$
\frac{r_1 r_2}{   (r_2 - r_1)^{1 + p/d}}  \le \frac{C r_1}{  r_2^{p/d}} \le C, 
$$  
for some positive constant $C$ depending only on $d$, $p$, and $\lambda$.  It follows that, for $0< r_1 <  r_2 < e^{-1}$ and $|g(r_1) - g(r_2)| > 1 $, 
\begin{align}\label{part3-1.2}
\frac{r_1^{d-1} r_2^{d-1}}{|r_1  - r_2 |^{p+d}} &= \bct{\frac{r_1 r_2}{\bct{r_2-r_1}^{\frac{p}{d}+1}}}^{d-1} \frac{1}{\md{r_1-r_2}^{\frac{p}{d}+1}}    \leq \frac{C}{\md{r_1-r_2}^{\frac{p}{d}+1}}.
\end{align}
We derive from \eqref{part3-1} and \eqref{part3-1.2} that 
\begin{equation}\label{part3-2}
I_{1,p}(u, B_{1/e})  \le C I_{1,\frac{p}{d}}(g, I). 
\end{equation}

We have
\begin{multline}\label{part3-3}
I_{1,p/d}(g, I) = 2 \mathop{\mathop{\iint_{I \times I}}_{|g(r_1)-g(r_2)|>1}}_{r_1 < r_2} \frac{1}{|r_2 - r_1|^{1 + \frac{p}{d}}} \, dr_1 \, d r_2 \\[6pt]  \le C \int_0^{e^{- \lambda}} \left(  \frac{1}{\big(r_1^{1/ \lambda} - r_1\big)^\frac{p}{d}} - \frac{1}{\big(e^{-1} - r_1 \big)^\frac{p}{d}}\right) \, dr_1 < + \infty,   
\end{multline}
since $r_1^{1/ \lambda} - r_1 \ge C r_{1}^{1/ \lambda}$ and $e^{-1} - r_1 \ge C$ for $r_1 \in (0, e^{-\lambda})$. 

On the other hand, for any $\tau \in I $, we have, with $\rho = \frac{\ln \gamma}{\ln \lambda}-1$,
\begin{equation}\label{part3-4}
\int_{B_{\tau}}  e^{\alpha \gamma^{ g}} \, dx  = \int_0^\tau e^{\alpha \big(\log r^{-1} \big)^{ (1+\rho)}} r^{d-1} \, dr = + \infty, 
\end{equation}
since $\lim_{r  \to 0_+} \big(\log r^{-1} \big)^{1+\rho}/  \log r^{-1} = + \infty$. 

Set, for $0<  \tau < e^{-1}$,  
$$
u_{\tau} (x) = u (\tau x ) \mbox{ for } x \in B_{e^{-1}}. 
$$
Then 
\begin{equation}\label{part3-5}
I_{1,p}(u_{\tau}, B_{e^{-1}}) = \tau^{p-d}  I_{1, p} (u, B_{\tau e^{-1}}) \to 0 \mbox{ as } \tau \to 0. 
\end{equation}

Combining \eqref{part3-4} and \eqref{part3-5} yields the conclusion since for any $M>0$ we can choose $\tau>0$ small enough so that $I_{1,p}(u_{\tau}, B_{e^{-1}})\leq M$. \qed

\medskip 

\noindent{\bf Proof of  assertion \eqref{pro3-2}.}  Let $n \in \mN$ large and fix $1< q < \gamma$ and denote $q' = q/ (q-1)$.  Define
\begin{equation*}\label{moser func}
u_{n}(x) = g_n(|x|) \mbox{ where } g_n (r)= \left\{\begin{array}{cl} \dsp \ln^\frac{1}{q} n  & \dsp \text{ if } 0\leq r\leq \frac{1}{n},\\[6pt] 
\dsp \frac{ \ln \bct{\frac{1}{r}}}{ \ln^\frac{1}{q'}n} &  \dsp \text{ if } \frac{1}{n} \leq r \leq \frac{1}{e}.
\end{array}\right. 
\end{equation*}
As in \eqref{part3-2}, we have
\begin{equation}\label{thm2-ppp1}
I_{1,d}(u_n, B_{1/e})  \le C I_{1,1}(g_n, I), 
\end{equation}
where $I = (0, 1/e)$.  

We now estimate $I_{1,1}(g_n, I)$.  Denote  $\I_n = (0, 1/n)$, and $\J_n = I \setminus \I_n$.  We have 
\begin{equation}\label{cal of nonloc qtty}
I_{1,1}\bct{g_{n},I} = 2 {\mathcal I}_1 +  {\mathcal I}_2. 
\end{equation}
where 
\begin{equation*}
\mcal{I}_1=  \mathop{\iint_{I_n \times J_n}}_{|g_n(r_1) - g_n(r_2)|  > 1} \frac{1}{|r_1-r_2|^{2 }} \, dr_1 \, dr_2 \quad \mbox{ and } \quad 
\mcal{I}_2 =  \mathop{\iint_{J_n \times J_n}}_{|g_n(r_1) - g_n(r_2)|  > 1} \frac{1}{|r_1-r_2|^{2}} \, dr_1 \, dr_2.
\end{equation*}

We next estimate $\mathcal{I}_1$ and $\mathcal{I}_2$. We begin with $\mathcal{I}_1$.  For $(r_1, r_2) \in \I_n \times \J_n$, we have $|g_n(r_1) - g_n(r_2)| > 1$ if and only if 
\begin{align*}
\md{\ln^\frac{1}{q} n -\frac{\ln \bct{\frac{1}{r_2}}}{\ln^\frac{1}{q'} n }} > 1, \text{ this implies } \frac{1}{e}\geq r_2>a_n : =  \frac{e^{\bct{\log n}^\frac{1}{q'}}}{n}.
\end{align*}
It follows that 
\begin{equation}\label{est of I}
\mcal{I}_1 =  \int_0^\frac{1}{n} \int_{a_n}^\frac{1}{e} \frac{dr_2 dr_1}{|r_1-r_2|^2}  \le  \ln \left( \frac{a_n}{a_n - 1/n} \right)  \to 0 \mbox{ as } n \to + \infty. 
\end{equation}
We next deal  with $\mathcal{I}_2$.  For $(r_1, r_2) \in \J_n \times \J_n$ with $r_2 \ge r_1$, we have $|g_n(r_1) - g_n(r_2)| > 1$ if and only if 
\begin{align*}
\dsp  \md{\frac{\ln \bct{\frac{1}{r_1}}-\ln \bct{\frac{1}{r_2}}}{\ln^\frac{1}{q'} n}} >1, \mbox{ this implies } \frac{1}{e} \geq r_2 > r_1 b_n  \text{ and } \frac{1}{n} \leq r_1 < \frac{1}{eb_n} \mbox{ with } b_n =e^{\ln^\frac{1}{q'}n}. 
\end{align*}
We then have 
\begin{align}\label{est of II}
\mcal{I}_2 &=  2 \int_{1/n}^{1/ (e b_n)} \int_{r_1 b_n}^\frac{1}{e} \frac{dr_2}{(r_2-r_1)^2} dr_1 \le   2 \int_{1/n}^{1/(e b_n)}  \frac{1}{r_1 (b_n -1)} \, dr_1  \to 0 \mbox{ as } n \to + \infty. 
\end{align}
Combining  \eqref{cal of nonloc qtty}, \eqref{est of I},  and \eqref{est of II}  yields 
\begin{align*}
\lim_{n\rightarrow \infty} I_{1,1}\bct{g_{n}, I} =0.
\end{align*}
which yields, by \eqref{thm2-ppp1}, 
\begin{align}\label{thm2-ppp2}
\lim_{n\rightarrow \infty} I_{1,d}\bct{u_{n},B_{1/e}} =0.
\end{align}

We have 
\begin{equation*} 
 \int_\frac{1}{n}^\frac{1}{2} \frac{\ln \bct{\frac{1}{r}}}{ \ln^\frac{1}{q'} n} dr =  \frac{1}{\bct{\log n}^\frac{1}{q'}}\bct{\frac{1}{2}\ln (2e) -\frac{1}{n}\ln (ne)}  \to 0  \text{ as } n\rightarrow \infty.
\end{equation*}
This implies 
\begin{equation}\label{thm2-ppp3}
0 \le \lim_{n \to + \infty} \int_{B_{1/e}} u_n \, dx \le \lim_{n \to + \infty} \int_{I} g_n \, dx =0. 
\end{equation}
On the other hand, since $\gamma>q$,  we have 
\begin{equation}\label{thm2-ppp4}
\int_{I} r^{d-1} e^{\alpha g_n^r}  \geq \int_0^\frac{1}{n} r^{d-1} e^{\alpha \ln^{\gamma/q} n } = \frac{1}{d n^{d}} e^{\alpha \ln^{\gamma/q} n }  \to + \infty \mbox{ as } n \to + \infty.  
\end{equation}
The conclusion now follows from \eqref{thm2-ppp2}, \eqref{thm2-ppp3}, and \eqref{thm2-ppp4}.
\qed

\footnotesize

\end{document}